\newcommand{\arXiv}[1]{\href{http://arxiv.org/abs/#1}{arXiv:#1}}
\def\bibaut#1{{\sc #1}}
\providecommand{\U}[1]{\protect\rule{.1in}{.1in}}
\providecommand{\U}[1]{\protect\rule{.1in}{.1in}}
\providecommand{\U}[1]{\protect\rule{.1in}{.1in}}
\providecommand{\U}[1]{\protect\rule{.1in}{.1in}}
\providecommand{\U}[1]{\protect\rule{.1in}{.1in}}
\newcommand{\C}{\mathbb C}
\newcommand{\G}{\mathbb G}
\renewcommand{\P}{\mathbb P}
\newcommand{\Z}{\mathbb Z}
\DeclareMathOperator{\Sec}{Sec}
\newcommand{\QED}{\ifhmode\unskip\nobreak\fi\quad {\rm Q.E.D.}} 
\newcommand{\N}{\mathbb{N}}
\renewcommand{\P}{\mathbb{P}}
\renewcommand{\sec}{\mathbb{S}ec}
\DeclareMathOperator{\expdim}{expdim}
\DeclareMathOperator{\Sym}{Sym}
\newtheorem{Theorem}{Theorem}[section]
\newtheorem{Lemma}[Theorem]{Lemma}
\newtheorem{Proposition}[Theorem]{Proposition}
\newtheorem{Corollary}[Theorem]{Corollary}
\theoremstyle{definition}
\newtheorem{Definition}[Theorem]{Definition}
\newtheorem{Remark}[Theorem]{Remark}
\newtheorem{Example}[Theorem]{Example}
\newtheorem{Notation}[Theorem]{Notation}
\begin{document}
\title{On non-secant defectivity of Segre-Veronese varieties}

\author[Carolina Araujo]{Carolina Araujo}
\address{\sc Carolina Araujo\\ IMPA, Estrada Dona Castorina 110, Rio de Janeiro, Brazil}
\email{caraujo@impa.br}

\author[Alex Massarenti]{Alex Massarenti}
\address{\sc Alex Massarenti\\
Universidade Federal Fluminense\\
Rua M\'ario Santos Braga\\
24020-140, Niter\'oi,  Rio de Janeiro\\ Brazil}
\email{alexmassarenti@id.uff.br}

\author[Rick Rischter]{Rick Rischter}
\address{\sc Rick Rischter\\ Universidade Federal de Itajub\'a, Av. BPS 1303, Bairro Pinheirinho, Itajub\'a, Minas Gerais, Brazil}
\email{rischter@unifei.edu.br}

\subjclass[2010]{Primary 14N05, 14N15; Secondary 14E05, 15A69}
\keywords{Segre-Veronese varieties, secant varieties, osculating spaces, secant defect, degenerations of rational maps}
\date{\today}

\maketitle

\begin{abstract}
Let $SV^{\pmb n}_{\pmb d}$ be the Segre-Veronese given as the image of the embedding induced by the line bundle $\mathcal{O}_{\mathbb{P}^{n_1}\times\dots\times\mathbb{P}^{n_r}}(d_1,\dots, d_r)$. We prove that asymptotically $SV^{\pmb n}_{\pmb d}$ is not $h$-defective for $h\leq n_1^{\lfloor \log_2(d-1)\rfloor}$.
\end{abstract}

\setcounter{tocdepth}{1}
\tableofcontents

%
%

\section{Introduction}

Secant varieties are classical objects in algebraic geometry.
The \textit{$h$-secant variety} $\sec_{h}(X)$ of a non-degenerate $n$-dimensional variety $X\subset\mathbb{P}^N$ is the Zariski closure 
of the union of all linear spaces spanned by collections of $h$ points of $X$. 
The \textit{expected dimension} of $\sec_{h}(X)$ is
$$
\expdim(\sec_{h}(X)):= \min\{nh+h-1,N\}.
$$
The actual dimension of $\sec_{h}(X)$ may be smaller than the expected one. This happens when there are infinitely many $(h-1)$-planes 
$h$-secant to $X$ passing trough a general point of  $\sec_{h}(X)$. 
Following \cite{Za}, we say that $X$ is \textit{$h$-defective} if 
$$
\dim(\sec_{h}(X)) < \expdim(\sec_{h}(X)).
$$
Determining secant defectivity is an old problem  in algebraic geometry, which goes back to the Italian school 
(see  \cite{Ca37}, \cite{Sc08}, \cite{Se01}, \cite{Te11}).

In this paper we investigate secant defectivity for  Segre-Veronese varieties. 
The problem is specially interesting in this case, in connection with problems of tensor decomposition 
(see  \cite{CM}, \cite{CGLM}, \cite{La12}).
Indeed, Segre-Veronese varieties parametrize rank one tensors. 
So their $h$-secant varieties parametrize tensors of a given rank depending on $h$. 
For this reason, they have been used to construct and study moduli spaces for additive decompositions of a general tensor into 
a given number of rank one tensors (see \cite{Do04}, \cite{DK93}, \cite{Ma16}, \cite{MM13}, \cite{RS00}, \cite{TZ11}, \cite{BGI11}).

The problem of secant defectivity for  Veronese varieties was completely solved in \cite{AH95}. 
In that paper, Alexander and  Hirshowitz showed that, except for the degree $2$ Veronese embedding, which is almost always defective, 
the degree $d$ Veronese embedding of $\mathbb{P}^n$ is not $h$-defective except in the following cases: 
$$
(d,n,h)\in\{(4,2,5),(4,3,9),(3,4,7),(4,4,14)\}.
$$

For Segre varieties, secant defectivity in classified in some special cases. 
Segre products of two factors $\mathbb{P}^{n_1}\times\mathbb{P}^{n_2}\subset \mathbb{P}^{^{n_1n_2+n_1+n_2}}$ are almost always defective.
For Segre products  $\mathbb{P}^{1}\times\dots\times\mathbb{P}^{1}\subset\mathbb{P}^N$, the problem was completely 
settled in \cite{CGG11}.
In general, $h$-defectivity of Segre products $\mathbb{P}^{n_1}\times\dots\times\mathbb{P}^{n_r}\subset\mathbb{P}^N$ is classified only for $h\leq 6$
(\cite{AOP09}).

Next we turn to Segre-Veronese varieties. These are products $\mathbb{P}^{n_1}\times\dots\times\mathbb{P}^{n_r}$ embedded by the complete 
linear system  $\big|\mathcal{O}_{\mathbb{P}^{n_1}\times\dots\times\mathbb{P}^{n_r}}(d_1,\dots, d_r)\big|$, $d_i>0$.
The problem  of secant defectivity  for  Segre-Veronese varieties has been solved in some very special cases, mostly for products of few factors 
(see \cite{CGG03}, \cite{AB09}, \cite{Ab10}, \cite{BCC11}, \cite{AB12}, \cite{BBC12}, \cite{AB13}).
Secant defectivity for  Segre-Veronese products  $\mathbb{P}^{1}\times\dots\times\mathbb{P}^{1}$, with arbitrary number of factor and degrees, 
was classified in \cite{LP13}. 
In general, $h$-defectivity is classified only for small values of $h$ (\cite[Proposition 3.2]{CGG03}):
except for the Segre product $\mathbb{P}^{1}\times\mathbb{P}^{1}\subset\mathbb{P}^3$, Segre-Veronese varieties 
$\mathbb{P}^{n_1}\times\dots\times\mathbb{P}^{n_r}$ are never $h$-defective for $h\leq \min\{n_i\}+1$.
In this paper we improve this bound by taking into account the embedding degrees $d_1,\dots, d_r$.
We show that, asymptotically, Segre-Veronese varieties are never $h$-defective for $h\leq (\min\{n_i\})^{\lfloor \log_2(d-1)\rfloor}$,
where $d=d_1+\dots+ d_r$.
More precisely, our main result in Theorem~\ref{main} can be rephrased as follows.

\begin{Theorem}\label{main_intro}
Let $\pmb{n}=(n_1,\dots,n_r)$ and $\pmb{d} = (d_1,\dots,d_r)$ be two $r$-uples of positive integers, with $n_1\leq \dots \leq n_r$ and $d=d_1+\dots+d_r\geq 3$. 
Let $SV^{\pmb n}_{\pmb d}\subset\mathbb{P}^N$ be the product  $\mathbb{P}^{n_1}\times\dots\times\mathbb{P}^{n_r}$ embedded by the complete 
linear system  $\big|\mathcal{O}_{\mathbb{P}^{n_1}\times\dots\times\mathbb{P}^{n_r}}(d_1,\dots, d_r)\big|$. 
Write 
$$
d-1 = 2^{\lambda_1}+\dots+2^{\lambda_s}+\epsilon,
$$ 
with $\lambda_1 > \lambda_2 >\dots >\lambda_s\geq 1$, $\epsilon\in\{0,1\}$. 
Then $SV^{\pmb n}_{\pmb d}$ is not $(h+1)$-defective for 
$$
h\leq n_1((n_1+1)^{\lambda_1-1}+\dots + (n_1+1)^{\lambda_s-1})+1.
$$
\end{Theorem}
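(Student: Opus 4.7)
The plan is to combine Terracini's Lemma with an iterated degeneration of tangent spaces into osculating spaces, following the general strategy that has been used successfully for Grassmannians and Segre products.

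First, I would invoke Terracini's Lemma to reduce non-$(h+1)$-defectivity to the statement that the linear span of tangent spaces $T_{p_0}SV^{\pmb n}_{\pmb d}, \dots, T_{p_h}SV^{\pmb n}_{\pmb d}$ at general points has the expected dimension. By upper-semicontinuity of the span's dimension in families, it suffices to exhibit a \emph{special} configuration of (possibly coincident and infinitely near) points for which the span has the expected dimension. The natural degenerate configurations are obtained by colliding general points along a suitable smooth subvariety $Y \subset SV^{\pmb n}_{\pmb d}$: when $k$ points collide appropriately along $Y$, the sum of their tangent spaces degenerates to an osculating space $T^{\mu}_p SV^{\pmb n}_{\pmb d}$ of some order $\mu$ at the limit point $p$. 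I expect such a degeneration criterion to appear earlier in the paper, reducing the theorem to the generic finiteness of a certain osculating projection from $SV^{\pmb n}_{\pmb d}$.

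The combinatorics of the bound comes from the binary expansion $d-1=2^{\lambda_1}+\dots+2^{\lambda_s}+\epsilon$. I would partition the $h+1$ specialization points into $s$ groups, where the $i$-th group has size approximately $n_1(n_1+1)^{\lambda_i-1}$ and is supported on a linear subspace $\mathbb{P}^{n_1}\hookrightarrow \mathbb{P}^{n_1}\times\dots\times\mathbb{P}^{n_r}$ of the first factor. The multiplier $(n_1+1)^{\lambda_i-1}$ is exactly what emerges from $\lambda_i-1$ iterated collisions on a $\mathbb{P}^{n_1}$: at each step each point splits into $n_1+1$ infinitesimally near points, matching the dimension of the ambient linear space of collision. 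After $\lambda_i-1$ such steps the tangent spaces in group $i$ have degenerated to a single osculating space $T^{2^{\lambda_i}-1}_{p_i}SV^{\pmb n}_{\pmb d}$ of order $2^{\lambda_i}-1$, which has room up to degree $d-1$ by the assumption $\lambda_i\leq \lfloor\log_2(d-1)\rfloor$.

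The bulk of the proof then reduces to a dimension count: one must show that the span
\[
\langle T^{2^{\lambda_1}-1}_{p_1}SV^{\pmb n}_{\pmb d},\dots,T^{2^{\lambda_s}-1}_{p_s}SV^{\pmb n}_{\pmb d}\rangle
\]
has the expected dimension, equivalently that a certain osculating projection of $SV^{\pmb n}_{\pmb d}$ from this span remains generically finite onto its image. Because the osculating order $2^{\lambda_i}-1$ at each $p_i$ is strictly less than $d$, these osculating spaces are proper and are governed by the multigraded Jacobian ideal, so their dimensions can be computed explicitly.

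The main obstacle will be verifying generic finiteness of the osculating projection. This is where one needs the careful choice of the points $p_i$ on a fixed $\mathbb{P}^{n_1}$ factor: one must check that the base locus of the linear system of degree $(d_1,\dots,d_r)$ hypersurfaces singular to order $2^{\lambda_i}$ at $p_i$ contains no positive-dimensional component through the general point of $SV^{\pmb n}_{\pmb d}$, apart from what is forced by the geometry of the $p_i$'s themselves. I expect the authors to reduce this to a statement about osculating projections of Veronese varieties of $\mathbb{P}^{n_1}$, where explicit control is available, and then to patch together the contributions from different $\lambda_i$ by a careful induction on $s$.
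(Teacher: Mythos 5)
Your high-level strategy is the right one and is essentially the paper's: reduce to generic finiteness of tangential projections (the paper uses the Chiantini--Ciliberto criterion, Proposition 3.5 of \cite{CC01}, rather than raw Terracini), degenerate collections of tangent spaces into osculating spaces, and verify generic finiteness of the resulting osculating projection by explicit coordinate arguments on the factors. However, your combinatorial bookkeeping does not produce the stated bound. Under the $(n_1+1)$-osculating regularity mechanism, one collision step merges $n_1+1$ osculating spaces of order $k$ into one of order $2k+1$; hence a group of $n_1(n_1+1)^{\lambda_i-1}$ tangent spaces collapses after $\lambda_i-1$ steps to $n_1$ osculating spaces of order $2^{\lambda_i}-1$ at $n_1$ distinct points, not to a single $T^{2^{\lambda_i}-1}_{p_i}$; a single such space can absorb only $(n_1+1)^{\lambda_i-1}$ tangent spaces. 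With your final center $\langle T^{2^{\lambda_1}-1}_{p_1},\dots,T^{2^{\lambda_s}-1}_{p_s}\rangle$ the method yields only $h\leq (n_1+1)^{\lambda_1-1}+\dots+(n_1+1)^{\lambda_s-1}+1$, missing the factor $n_1$. The paper instead keeps $n_1$ base points throughout and, for \emph{each} of them, merges the blocks coming from the different $\lambda_i$ using a second degeneration (``strong $2$-osculating regularity'': $\langle T^{k_1}_p,T^{k_2}_{\gamma(t)}\rangle$ degenerates into $T^{k_1+k_2+1}_p$), ending with the projection from $\langle T^{d-2}_{p_1},\dots,T^{d-2}_{p_{n_1}}\rangle$, whose birationality is the content of Proposition \ref{projoscseveralpoints}. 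This second merging device is absent from your outline, and without it the binary expansion of $d-1$ cannot be reassembled into a single osculating order at each base point.

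The second, more serious gap is that you take the degeneration of spans of tangent spaces into osculating spaces as a given (``I expect such a degeneration criterion to appear earlier in the paper''). This containment of the flat limit $T_0$ of $\langle T^{k}_{p_1},T^{k}_{\gamma_2(t)},\dots\rangle$ inside $T^{2k+1}_{p_1}$ is \emph{not} automatic and can fail: the paper points to a ruled surface without $2$-osculating regularity and exhibits a smooth rational normal scroll with $2$- but not $3$-osculating regularity. Establishing $(n_1+1)$-osculating regularity and strong $2$-osculating regularity for $SV^{\pmb n}_{\pmb d}$ is the technical heart of the argument (Section \ref{degtanosc}): one writes the moving spans explicitly in the monomial basis, produces degenerating families of hyperplanes containing them, and reduces to the nonvanishing of a matrix of binomial coefficients, settled by the Gessel--Viennot determinant formula. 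A complete proof must supply this; as written, your proposal defers precisely the step that does not follow from general principles. Your discussion of generic finiteness of the osculating projection (reduction to Veronese projections of $\mathbb{P}^{n_1}$, control of the base locus) is in the right direction and matches the paper's factor-by-factor argument via the auxiliary maps $\Sigma_l$ and an explicit Cremona composition.
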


Our proof of Theorem~\ref{main_intro} follows the strategy introduced in \cite{MRi16}, which we now explain. 
Given a non-degenerate $n$-dimensional variety $X\subset\mathbb{P}^N$, and general points  $x_1,\dots,x_h\in X\subset\mathbb{P}^N$, consider the 
linear projection with center $\left\langle T_{x_1}X,\dots,T_{x_h}X\right\rangle$,
$$
\tau_{X,h}:X\subseteq\mathbb{P}^N\dasharrow\mathbb{P}^{N_h}.
$$
By \cite[Proposition 3.5]{CC01}, if $\tau_{X,h}$ is generically finite then $X$ is not $(h+1)$-defective.
In general, however, it is hard to control the dimension of the fibers of the tangential projections $\tau_{X,h}$ as $h$ gets larger. 
In \cite{MRi16} a new strategy  was developed, based on the more general  \emph{osculating projections} instead of just tangential projections.
For a smooth point $x\in X\subset\mathbb{P}^N$, the \textit{$k$-osculating space} $T_x^{k}X$ of $X$ at $x$ is roughly the smaller linear subspace 
where $X$ can be locally approximated up to order $k$ at $x$ (see Definition~\ref{oscdef}).
Given $x_1,\dots, x_l\in X$ general points, we consider the linear projection with center $\left\langle T_{x_1}^{k_1}X,\dots, T_{x_l}^{k_l}X\right\rangle$,
$$
\Pi_{T^{k_1,\dots,k_l}_{x_1,\dots,x_l}}:X\subset\mathbb{P}^N\dasharrow\mathbb{P}^{N_{k_1,\dots,k_l}},
$$
and call it a \textit{$(k_1+\dots +k_l)$-osculating projection}.
Under suitable conditions, one can degenerate the linear span of several tangent spaces $T_{x_i}X$ into a subspace contained in a
single osculating space $T_x^{k}X$. 
So the tangential projections $\tau_{X,h}$ degenerates to a linear projection with center contained in the linear span of osculating spaces,
$\left\langle T_{p_1}^{k}X,\dots, T_{p_l}^{k}X\right\rangle$.
If $\Pi_{T^{k,\dots,k}_{p_1,\dots,p_l}}$ is generically finite, then $\tau_{X,h}$ is also generically finite, and one concludes that $X$ is not $(h+1)$-defective.
The advantage of this approach is that one has to consider osculating spaces at much less points than $h$, 
allowing to control the dimension of the fibers of the projection. 
In  \cite{MRi16}, this strategy was successfully applied to study the problem of secant defectivity for Grassmannians. 
Here we apply it to Segre-Veronese varieties.

The paper is organized as follows. 
In Section \ref{oscspaces} we describe explicitly osculating spaces of Segre-Veronese varieties.
In Section \ref{oscproj} we study the relative dimension of general osculating projections. 
In Section \ref{degtanosc} we study how many general tangent projections degenerate to osculating projections. 
Finally, in Section \ref{mainsec} we apply these result and the techniques developed in \cite{MRi16}  
to prove our main result on the dimension of secant varieties of Segre-Veronese varieties.

\

\noindent {\bf Notation and conventions.}
We always work over the field ${\mathbb C}$ of complex numbers. 
Varieties are always assumed to be irreducible. 
For a vector space $V$, we denote by $\mathbb{P}(V)$ its Grothendieck projectivization, i.e., the projective space of
nonzero linear forms on $V$ up to scaling.

\

\noindent {\bf Acknowledgments.}
We would like to thank Maria Chiara Brambilla for a detailed account on the state of the art about secant defectivity of Segre-Veronese varieties.\\
The first named author was partially supported by CNPq and Faperj Research Fellowships. The second named author is a member of the Gruppo Nazionale per le Strutture Algebriche, Geometriche e le loro Applicazioni of the Istituto Nazionale di Alta Matematica "F. Severi" (GNSAGA-INDAM). The third named author would like to thanks CNPq for the financial support.

%
%

\section{Osculating spaces of Segre-Veronese varieties}\label{oscspaces}

In this section we describe osculating spaces of Segre-Veronese varieties.
We start by defining osculating spaces. 
They can also be defined intrinsically via jet bundles (see \cite[Section 3]{MRi16}). 
 
\begin{Definition}\label{oscdef}
Let $X\subset \P^N$ be a projective variety of dimension $n$, and $p\in X$ a smooth point.
Choose a local parametrization of $X$ at $p$:
$$
\begin{array}{cccc}
\phi: &\mathcal{U}\subseteq\mathbb{C}^n& \longrightarrow & \mathbb{C}^{N}\\
      & (t_1,\dots,t_n) & \longmapsto & \phi(t_1,\dots,t_n) \\
      & 0 & \longmapsto & p 
\end{array}
$$
For a multi-index $I = (i_1,\dots,i_n)$, set 
\begin{equation}\label{osceq}
\phi_I = \frac{\partial^{|I|}\phi}{\partial t_1^{i_1}\dots\partial t_n^{i_n}}.
\end{equation}
For any $m\geq 0$, let $O^m_pX$ be the affine subspace of $\mathbb{C}^{N}$ centered at $p$ and spanned by the vectors $\phi_I(0)$ with  $|I|\leq m$.

The $m$-\textit{osculating space} $T_p^m X$ of $X$ at $p$ is the projective closure  of  $O^m_pX$ in $\mathbb{P}^N$.
Note that $T_p^0 X=\{p\}$, and $T_p^1 X$ is the usual tangent space of $X$ at $p$.


The \textit{$m$-osculating dimension} of  $X$ at $p$ is 
\begin{equation*}
\dim(T_p^m X) = \binom{n+m}{n}-1-\delta_{m,p},
\end{equation*}
where $\delta_{m,p}$ is the number of independent differential equations of order $\leq m$ satisfied by $X$ at $p$. 
\end{Definition}

Next we turn to Segre-Veronese varieties.
In Proposition~\ref{oscsegver} below, 
we describe explicitly their osculating spaces at \emph{coordinate points} by computing \eqref{osceq} for 
a suitable rational parametrization.  
In order to do so, we recall the definition of Segre-Veronese varieties and fix some notation to be used throughout the paper.

\begin{Notation}\label{notationSV}
Let $\pmb{n}=(n_1,\dots,n_r)$ and $\pmb{d} = (d_1,\dots,d_r)$ be two $r$-uples of positive integers, with $n_1\leq \dots \leq n_r$.
Set $d=d_1+\dots+d_r$,  $n=n_1+\dots+n_r$, and $N(\pmb{n},\pmb{d})=\prod_{i=1}^r\binom{n_i+d_i}{n_i}-1$. 

Let $V_1,\dots, V_r$ be vector spaces of dimensions $n_1+1\leq n_2+1\leq \dots \leq n_r+1$, and consider the product
$$
\mathbb{P}^{\pmb{n}} = \mathbb{P}(V_1^{*})\times \dots \times \mathbb{P}(V_r^{*}).
$$
The line bundle 
$$
\mathcal{O}_{\mathbb{P}^{\pmb{n}} }(d_1,\dots, d_r)=\mathcal{O}_{\mathbb{P}(V_1^{*})}(d_1)\boxtimes\dots\boxtimes \mathcal{O}_{\mathbb{P}(V_1^{*})}(d_r)
$$
induces an embedding
$$
\begin{array}{cccc}
\sigma\nu_{\pmb{d}}^{\pmb{n}}:
&\mathbb{P}(V_1^{*})\times \dots \times \mathbb{P}(V_r^{*})& \longrightarrow &
\mathbb{P}(\Sym^{d_1}V_1^{*}\otimes\dots\otimes \Sym^{d_r}V_r^{*})
=\mathbb{P}^{N(\pmb{n},\pmb{d})},\\
      & (\left[v_1\right],\dots,\left[v_r\right]) & \longmapsto & [v_1^{d_1}\otimes\dots\otimes v_r^{d_r}]
\end{array}
$$ 
where $v_i\in V_i$.
We call the image 
$$
SV_{\pmb{d}}^{\pmb{n}}= \sigma\nu_{\pmb{d}}^{\pmb{n}}(\mathbb{P}^{\pmb{n}} ) \subset \mathbb{P}^{N(\pmb{n},\pmb{d})}
$$ 
a \textit{Segre-Veronese variety}. It is a smooth  variety of dimension $n$ and degree 
$\frac{(n_1+\dots +n_r)!}{n_1!\dots n_r!}d_1^{n_1}\dots d_r^{n_r}$ in $\mathbb{P}^{N(\pmb{n},\pmb{d})}$. 

When $r = 1$, $SV_{d}^{n}$ is a Veronese variety. In this case we write $V_d^n$ for $SV_{d}^{n}$, and $v_{d}^{n}$ for the Veronese embedding.
When $d_1 = \dots = d_r = 1$, $SV_{1,\dots,1}^{\pmb{n}}$ is a Segre variety. 
In this case we write $S^{\pmb{n}}$ for $SV_{1,\dots,1}^{\pmb{n}}$, and  $\sigma^{\pmb{n}}$ for the Segre embedding.
Note that 
$$
\sigma\nu_{\pmb{d}}^{\pmb{n}}=\sigma^{\pmb{n}'}\circ \left( \nu_{d_1}^{n_1}\times \dots \times \nu_{d_r}^{n_r}\right),
$$
where $\pmb{n}'=(N(n_1,d_1),\dots,N(n_r,d_r))$.

Given $v_1, \dots, v_{d_j}\in V_j$, we denote by $v_1\cdot \dots \cdot v_{d_j} \in \Sym^{d_j}V_j$  the symmetrization of
$v_1\otimes \dots \otimes v_{d_j}$.

Hoping that no confusion will arrive, we write $(e_0,\dots,e_{n_j})$ for a fixed a basis of each $V_j$. 
Given a $d_j$-uple $I=(i_1,\dots,i_{d_j})$, with $0\leq i_1 \leq \dots \leq i_{d_j} \leq n_j$, we denote by 
$e_{I}\in \Sym^{d_j}V_j$  the symmetric product $e_{i_{1}}\cdot\dots\cdot e_{i_{d_j}}$.

For each $j\in\{1, \dots, r\}$, consider a $d_j$-uple $I^j=(i^j_1,\dots,i^j_{d_j})$, with $0\leq i^j_1 \leq \dots \leq i^j_{d_j} \leq n_j$, and
set 
\begin{equation*}\label{vector}
I = (I^1,\dots,I^r)=((i_{1}^{1},\dots,i_{d_1}^{1}),
(i_{1}^{2},\dots,i_{d_2}^{2}),\dots,(i_{1}^{r},\dots,i_{d_r}^{r})).
\end{equation*}
We denote by $e_I$ the vector
$$
e_I = e_{I^1}\otimes e_{I^2} \otimes \cdots \otimes e_{I^r}\in \Sym^{d_1}V_1\otimes\dots\otimes \Sym^{d_r}V_r,
$$
as well as the corresponding point in 
$\mathbb{P}(\Sym^{d_1}V_1^{*}\otimes\dots\otimes \Sym^{d_r}V_r^{*})=\mathbb{P}^{N(\pmb{n},\pmb{d})}$.
When $I^j=(i_j,\dots,i_j)$ for every $j\in\{1, \dots, r\}$, for some $0\leq i_j \leq n_j$, we have
$$
e_I=\sigma\nu_{\pmb{d}}^{\pmb{n}}(\left[e_{i_1}\right],\dots,\left[e_{i_r}\right])
\in SV_{\pmb{d}}^{\pmb{n}}\subset \mathbb{P}^{N(\pmb{n},\pmb{d})}.
$$
In this case we say that $e_I$ is a \emph{coordinate point} of $SV_{\pmb{d}}^{\pmb{n}}$.
\end{Notation}

\begin{Definition}\label{distance}
Let $n$ and $d$ be positive integers, and set 
$$
\Lambda_{n,d}=\{I=(i_1,\dots,i_{d}),0\leq i_1 \leq \dots \leq i_{d} \leq n\}.
$$
For $I,J\in \Lambda_{n,d}$, we define their distance $d(I,J)$ as the number of different coordinates.
More precisely, write $I=(i_1,\dots,i_{d})$ and $J=(j_1,\dots,j_{d})$. 
There are $r\geq 0$, distinct indices $\lambda_1, \dots, \lambda_r\subset \{1, \dots, d\}$, and 
distinct indices $\tau_1, \dots, \tau_r\subset \{1, \dots, d\}$ such that $i_{\lambda_k}=j_{\tau_k}$ for every $1\leq k\leq r$, and 
$\{i_\lambda | \lambda\neq \lambda_1, \dots, \lambda_r\}\cap \{j_\tau | \tau\neq \tau_1, \dots, \tau_r\}=\emptyset$.
Then $d(I,J)=d-r$.
Note that $\Lambda_{n,d}$ has diameter $d$ and size $\binom{n+d}{n}=N(n,d)+1$.

Let $\pmb{n}=(n_1,\dots,n_r)$ and $\pmb{d} = (d_1,\dots,d_r)$ be two $r$-uples of positive integers, and set 
$$
\Lambda=\Lambda_{\pmb{n},\pmb{d}}=\Lambda_{n_1,d_1}\times \dots \times \Lambda_{n_r,d_r}.
$$
For $I=(I^1,\dots,I^r),J=(J^1,\dots,J^r)\in \Lambda$, 
we define their distance as 
$$
d(I,J)=d(I^1,J^1)+\dots+d(I^r,J^r).
$$
Note that $\Lambda$ has diameter $d$ and size $\prod_{i=1}^r\binom{n_i+d_i}{n_i}=N(\pmb{n},\pmb{d})+1$.
\end{Definition}

We can now state the main result of this section. 

\begin{Proposition}\label{oscsegver}
Let the notation and assumptions be as in Notation~\ref{notationSV} and Definition~\ref{distance}.
Set $I^1=(i_1,\dots,i_1),\dots, I^r=(i_r,\dots,i_r)$, with $0\leq i_j \leq n_j$, and $I = (I^1,\dots,I^r)$. 
Consider the point 
$$
e_I=\sigma\nu_{\pmb{d}}^{\pmb{n}}(\left[e_{i_1}\right],\dots,\left[e_{i_r}\right]) \in SV_{\pmb{d}}^{\pmb{n}}.
$$
For any $s\geq 0$, we have 
$$
T^s_{e_I}(SV_{\pmb{d}}^{\pmb{n}})= \left\langle e_J \: | \: d(I,J)\leq s\right\rangle. 
$$
In particular, $T^s_{e_I}(SV_{\pmb{d}}^{\pmb{n}})=\P^{N(\pmb{n},\pmb{d})}$ for any $s\geq d.$
\end{Proposition}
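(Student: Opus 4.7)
The plan is to write down an explicit rational parametrization of $SV^{\pmb n}_{\pmb d}$ centered at $e_I$ and read off its Taylor expansion, matching each monomial in the affine parameters with a unique coordinate vector $e_J$ in a way that translates the total degree of the monomial into the combinatorial distance $d(I,J)$.

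After acting by $\prod_j \Aut(\P(V_j^*))$, which permutes coordinate points and preserves $SV^{\pmb n}_{\pmb d}$ and its osculating spaces, I may assume $i_j=0$ for every $j$, so $I^j=(0,\dots,0)$ and $e_I=e_0^{d_1}\otimes\cdots\otimes e_0^{d_r}$. I then parametrize a neighborhood of $e_I$ in $SV^{\pmb n}_{\pmb d}$ by
$$
\phi(\mathbf t)=v_1(\mathbf t^1)^{d_1}\otimes\cdots\otimes v_r(\mathbf t^r)^{d_r}, \qquad v_j(\mathbf t^j)=e_0+\sum_{k=1}^{n_j}t^j_k e_k.
$$
Since the $e_0^{d_1}\otimes\cdots\otimes e_0^{d_r}$-coordinate of $\phi$ is identically $1$, the image lies in the affine chart at $e_I$ and $\phi(0)=e_I$, so the osculating space $T_{e_I}^s(SV^{\pmb n}_{\pmb d})$ is the projective closure of the linear span of the partial derivatives $\phi_\alpha(0)$ with $|\alpha|\leq s$.

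By the multinomial theorem, each factor expands as
$$
v_j(\mathbf t^j)^{d_j}=\sum_{J^j\in\Lambda_{n_j,d_j}} c_{J^j}\,\mathbf t^{J^j}\,e_{J^j},
$$
where $c_{J^j}>0$ is a multinomial coefficient and $\mathbf t^{J^j}$ is the monomial $(t^j_1)^{a^j_1}\cdots (t^j_{n_j})^{a^j_{n_j}}$, with $a^j_k$ the number of entries of $J^j$ equal to $k$. The total degree of $\mathbf t^{J^j}$ is $d_j-a^j_0$, which is exactly $d(I^j,J^j)$ since $I^j=(0,\dots,0)$. Tensoring,
$$
\phi(\mathbf t)=\sum_{J\in\Lambda_{\pmb n,\pmb d}} c_J\,\mathbf t^J\,e_J,
$$
and the total degree of $\mathbf t^J$ equals $d(I,J)$. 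Crucially, the assignment $J\mapsto \mathbf t^J$ is injective: the exponent vector of $\mathbf t^J$ records, for each $(j,k)$ with $k\geq 1$, the number of entries of $J^j$ equal to $k$, from which one recovers the number of zero entries as $d_j-\sum_k a^j_k$, hence all of $J$.

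This injectivity is the heart of the argument: it means that taking a mixed partial $\phi_\alpha$ and evaluating at $0$ either yields $0$ (when $\alpha$ violates the constraint $\sum_k \alpha^j_k\leq d_j$ for some $j$, since the corresponding monomial does not appear in $\phi$) or a nonzero scalar multiple of the unique $e_J$ whose monomial $\mathbf t^J$ is $\prod_{j,k}(t^j_k)^{\alpha^j_k}$, in which case $d(I,J)=|\alpha|$. Conversely, any $J$ with $d(I,J)\leq s$ is realized by choosing $\alpha$ to match $\mathbf t^J$, with $|\alpha|=d(I,J)\leq s$. Therefore the span of $\{\phi_\alpha(0):|\alpha|\leq s\}$ is precisely $\langle e_J: d(I,J)\leq s\rangle$, proving the formula. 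The last assertion is immediate, since $\Lambda_{\pmb n,\pmb d}$ has diameter $d$, so for $s\geq d$ every $e_J$ appears and the span fills $\P^{N(\pmb n,\pmb d)}$. The only real obstacle is bookkeeping: verifying that the multinomial expansion, distance function, and factorial factors from differentiation conspire correctly so that each $\phi_\alpha(0)$ hits a \emph{single} basis vector $e_J$ with the right value of $d(I,J)$.
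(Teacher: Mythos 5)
Your proposal is correct and follows essentially the same route as the paper: after normalizing to $I^j=(0,\dots,0)$, both arguments use the monomial parametrization of the affine chart at $e_I$, observe that each basis vector $e_J$ is paired with a unique monomial of total degree $d(I,J)$, and conclude that each partial derivative at the origin is a nonzero multiple of a single $e_J$ with $d(I,J)$ equal to the order of differentiation. The only cosmetic difference is that you carry the multinomial coefficients explicitly while the paper absorbs them into a diagonal rescaling of coordinates; this does not affect the span computation.
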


\begin{proof}
We may assume that $I^1=(0,\dots,0),\dots,I^r=(0,\dots,0)$. 
Write $\big(z_K\big)_{K\in \Lambda}$, for coordinates in $\P^{N(\pmb{n},\pmb{d})}$, and 
consider the rational parametrization
$$
\phi:\mathbb{A}^{\prod n_i}\rightarrow SV_{\pmb{d}}^{\pmb{n}}\cap \big(z_I\neq 0\big)\subset \mathbb{A}^{N(\pmb{n},\pmb{d})}
$$
given by 
$$
A=(a_{j,i})_{j=1,\dots, r,i=1,\dots,n_j}
\mapsto \left( \prod_{j=1}^r \prod_{k=1}^{d_j} a_{j,i_{k}^{j}}\right)_{K=(K^1,\dots,K^r)\in \Lambda \backslash \{I\}},
$$
where $K^{j}=(i_{1}^j,\dots,i_{d_j}^j)\in \Lambda_{n_j,d_j}$ for each $j=1,...,r$.

For integers $l$ and $m$, we write $\deg_{l,m}K$ for the degree of the polynomial
$$
\phi(A)_{K}:=\prod_{j=1}^r \prod_{k=1}^{d_j} a_{j,i_{k}^{j}}
$$
with respect to $a_{l,m}$. Then $0\leq \deg_{l,m}K\leq d_l$, and the degree of $\phi(A)_{K}$
with respect to all the variables $a_{j,i}$ is at most $d$. 
One computes:
$$
\left(\dfrac{\partial^{\lambda_1+\dots+\lambda_t} \phi(A)}
{\partial^{\lambda_1}a_{l_1,m_1}\dots\partial^{\lambda_t}a_{l_t,m_t}}\right)_{K}\!\!\!=\!
\begin{cases}
0 &\mbox{if } \deg_{l_j,m_j} K<\lambda_j \mbox{ for some } j.\\
\dfrac{\prod_{j=1}^t (\deg_{l_j,m_j}K) !\phi(A)_{K}}
{\prod_{j=1}^t (\deg_{l_j,m_j}K-\lambda_j) ! a_{l_j,m_j}^{\lambda_j}} &\mbox{otherwise }
\end{cases}
$$
For $A = 0$ we get
$$
\left(\dfrac{\partial^{\lambda_1+\dots+\lambda_t} \phi(0)}
{\partial^{\lambda_1}a_{l_1,m_1}\dots\partial^{\lambda_t}a_{l_t,m_t}}\right)_{K}\!\!=
\begin{cases}
0 &\mbox{if } \deg_{l_j,m_j} K\neq\lambda_j \mbox{ for some } j.\\
\prod_{j=1}^t (\deg_{l_j,m_j}K) ! &\mbox{otherwise }
\end{cases}$$
Therefore 
$$
\dfrac{\partial^{\lambda_1+\dots+\lambda_t} \phi(0)}
{\partial^{\lambda_1}a_{l_1,m_1}\dots\partial^{\lambda_t}a_{l_t,m_t}}=
\left( \lambda_1!\right)\cdots\left( \lambda_t!\right)e_J,  
$$
where $J\in \Lambda$ is characterized by
$$
\deg_{l,m}J =
\begin{cases}
\lambda_j &\mbox{if } (l,m)=(l_j,m_j) \mbox{ for some } j.\\
0 &\mbox{otherwise }
\end{cases}
$$
Note that $d(J,I)\!=\!\lambda_1+\dots+\lambda_t$. Conversely every $J\in \Lambda$
with $d(J,I)\!=\!\lambda_1+\dots+\lambda_t$ can be obtained in this way. Therefore, for every $0\leq s \leq d$, we have
$$
\left\langle \dfrac{\partial^s \phi(0)}{\partial^{\lambda_1}a_{l_1,m_1}\dots\partial^{\lambda_t}a_{l_t,m_t}}
\: \big| \: 1\leq l_1,\dots,l_t\leq r, 1\leq m_j \leq n_j, j=1,\dots,t
\right\rangle=
\left\langle e_J | d(J,I)=s\right\rangle,
$$
and hence $T^s_{e_I}(SV_{\pmb{d}}^{\pmb{n}})= \left\langle e_J \: | \: d(I,J)\leq s\right\rangle$.
\end{proof}

\begin{Corollary}
For any point $p\in SV^{\pmb n}_{\pmb d}$ we have
$$
\dim T^s_p SV^{\pmb n}_{\pmb d}=\sum_{l=1}^s 
\sum_{\substack{0\leq l_1\leq d_1,\dots, 0\leq l_r \leq d_r \\ l_1+\dots +l_r=l}}
\!\!\!\!\binom{n_1+l_1-1}{l_1}\cdots\binom{n_r+l_r-1}{l_r}
$$
for any $0\leq s\leq d$, while $T^s_{p}(SV_{\pmb{d}}^{\pmb{n}})=\P^{N(\pmb{n},\pmb{d})}$ for any $s\geq d$.\\
In particular, 
$$
\dim T^s_p V^{n}_{d}=n+\binom{n+1}{2}+\dots+\binom{n+s-1}{s}
$$
for any $0\leq s\leq d$.
\end{Corollary}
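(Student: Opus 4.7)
The plan is to reduce to a coordinate point, apply Proposition~\ref{oscsegver}, and then count the indices $J\in\Lambda$ at distance $\leq s$ from a fixed $I$.

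First, I would note that the ambient group $GL(V_1)\times\dots\times GL(V_r)$ acts linearly on $\Sym^{d_1}V_1^{*}\otimes\dots\otimes\Sym^{d_r}V_r^{*}$ and the Segre-Veronese embedding $\sigma\nu_{\pmb d}^{\pmb n}$ is equivariant with respect to the induced $PGL$-action. Since $PGL_{n_1+1}\times\dots\times PGL_{n_r+1}$ acts transitively on $\mathbb{P}^{\pmb n}$, and a projective linear isomorphism of $\mathbb{P}^{N(\pmb n,\pmb d)}$ preserves osculating spaces, $\dim T^s_p(SV^{\pmb n}_{\pmb d})$ is independent of $p\in SV^{\pmb n}_{\pmb d}$. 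In particular, we may compute it at a coordinate point $e_I$ with $I=(I^1,\dots,I^r)$, $I^j=(i_j,\dots,i_j)$.

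By Proposition~\ref{oscsegver}, $T^s_{e_I}(SV^{\pmb n}_{\pmb d})=\langle e_J\mid d(I,J)\leq s\rangle$. The vectors $\{e_J\}_{J\in\Lambda}$ are a basis of the ambient space (they are the coordinate points), hence linearly independent, so
$$
\dim T^s_{e_I}(SV^{\pmb n}_{\pmb d})=\#\{J\in\Lambda\mid d(I,J)\leq s\}-1.
$$
The key combinatorial step is the count: for $J=(J^1,\dots,J^r)\in\Lambda$, I would use that $d(I,J)=d(I^1,J^1)+\dots+d(I^r,J^r)$, and that because $I^j$ consists of $d_j$ copies of $i_j$, the distance $d(I^j,J^j)$ equals the number $l_j$ of entries of $J^j$ different from $i_j$. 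Thus counting $J^j\in\Lambda_{n_j,d_j}$ with $d(I^j,J^j)=l_j$ amounts to counting non-decreasing $l_j$-tuples with entries in $\{0,\dots,n_j\}\setminus\{i_j\}$, a set of size $n_j$. This multiset count equals $\binom{n_j+l_j-1}{l_j}$ (with the convention $\binom{n_j-1}{0}=1$ for $l_j=0$).

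Summing over the possible ways to split $l=l_1+\dots+l_r$ among the factors, the number of $J$ with $d(I,J)=l$ is
$$
\sum_{\substack{0\leq l_1\leq d_1,\dots,0\leq l_r\leq d_r\\ l_1+\dots+l_r=l}}\binom{n_1+l_1-1}{l_1}\cdots\binom{n_r+l_r-1}{l_r}.
$$
Subtracting $1$ for the projective dimension is absorbed by dropping the $l=0$ term (which contributes exactly $1$), giving the stated formula for $0\leq s\leq d$. For $s\geq d$ the same proposition gives $T^s_p(SV^{\pmb n}_{\pmb d})=\mathbb{P}^{N(\pmb n,\pmb d)}$. The Veronese case $r=1$ is immediate by specialization, using $\binom{n}{1}=n$. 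There is no real obstacle here: the only point requiring care is confirming the combinatorial identification of $d(I^j,J^j)$ with the number of non-$i_j$ entries of $J^j$, which follows directly from the multiset interpretation of Definition~\ref{distance}.
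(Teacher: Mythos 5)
Your proof is correct and is exactly the argument the paper leaves implicit (the Corollary is stated without proof as an immediate consequence of Proposition~\ref{oscsegver}): reduce to a coordinate point by homogeneity, then count the multisets $J^j$ at distance $l_j$ from the constant tuple $(i_j,\dots,i_j)$, which gives $\binom{n_j+l_j-1}{l_j}$. The combinatorial identification of $d(I^j,J^j)$ with the number of entries of $J^j$ different from $i_j$, and the absorption of the $-1$ into dropping the $l=0$ term, are both handled correctly.
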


%
%

\section{Osculating projections}\label{oscproj}
In this section we study linear projections of Segre-Veronese varieties from their osculating spaces.
We follow the notation introduced in the previous section.

We start by analyzing projections of Veronese varieties from osculating spaces at coordinate points.
We consider a Veronese variety $V_{d}^{n}\subset \mathbb{P}^{N(n,d)}$, $d\geq 2$, and a coordinate point $e_{\underline{i}}=e_{(i,\dots,i)}\in V_{d}^{n}$
for some $i\in\{0,1,\dots,n\}$.
We write $(z_I)_{I\in \Lambda_{n,d}}$ for the coordinates in $\mathbb{P}^{N(n,d)}$. 
The linear projection
\begin{align*}
\pi_{i}:\P^{n}&\dasharrow \P^{n-1}\\
(x_j)
&\mapsto (x_j)_{j\neq i}
\end{align*}
induces the linear projection
\begin{align}\label{projonepoint}
\Pi_{i}:V_{d}^{n}&\dasharrow V_{d}^{n-1}\\
(z_I)_{I\in \Lambda_{n,d}}
&\mapsto (z_I)_{I\in \Lambda_{n,d} \: | \: i\notin I}\nonumber
\end{align}
making the following diagram commute
\[
\begin{tikzpicture}[xscale=6.5,yscale=-1.7]
    \node (A0_0) at (0, 0) {$\mathbb{P}^{n}$};
    \node (A0_1) at (1, 0) {$V_{d}^{n}\subseteq \mathbb{P}^{N(n,d)}$};
    \node (A1_0) at (0, 1) {$\mathbb{P}^{n-1}$};
    \node (A1_1) at (1, 1) {$V_{d}^{n-1}\subseteq \mathbb{P}^{N(n-1,d)}$};
    \path (A0_0) edge [->]node [auto] {$\scriptstyle{\nu_{d}^{n}}$} (A0_1);
    \path (A1_0) edge [->]node [auto] {$\scriptstyle{\nu_{d}^{n-1}}$} (A1_1);
    \path (A0_1) edge [->,dashed]node [auto] {$\scriptstyle{\Pi_{i}}$} (A1_1);
    \path (A0_0) edge [->,dashed]node [auto,swap] {$\scriptstyle{\pi_{i}}$} (A1_0);
  \end{tikzpicture} 
\]

\begin{Lemma}\label{projoscveronese}
Consider the projection of the Veronese variety $V_{d}^{n}\subset \mathbb{P}^{N(n,d)}$, $d\geq 2$, from 
the osculating space ${T_{e_{\underline{i}}}^s}$ of order $s$ at the point $e_{\underline{i}}=e_{(i,\dots,i)}\in V_{d}^{n}$, $0\leq s\leq d-1$:
$$
\Gamma_i^s:V_{d}^{n}\dasharrow \P^{N(n,d,s)}.
$$
Then $\Gamma_i^s$ is birational for any $s\leq d-2$, while $\Gamma_i^{d-1}=\Pi_{i}$.
\end{Lemma}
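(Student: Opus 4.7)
The plan is entirely concrete, using the coordinate description of osculating spaces from Proposition~\ref{oscsegver}.

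\textbf{Step 1 (identify the surviving coordinates).} First, I would apply Proposition~\ref{oscsegver} with $r=1$ at the point $e_{\underline{i}}$ (so $I=(i,\dots,i)$) to obtain
$$
T^s_{e_{\underline{i}}} V_d^n \;=\; \langle e_J \mid d(I,J)\leq s\rangle.
$$
Since $d(I,J)$ counts the entries of $J$ different from $i$, the coordinate point $e_J$ lies in this span iff $J$ contains at least $d-s$ entries equal to $i$. Consequently $\Gamma_i^s$ retains exactly those homogeneous coordinates $z_J$ for which $J$ has \emph{at most} $d-s-1$ entries equal to $i$.

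\textbf{Step 2 (the case $s=d-1$).} Here the retained multi-indices are precisely those with no entry equal to $i$, which matches the defining formula for $\Pi_i$ in \eqref{projonepoint}. Hence $\Gamma_i^{d-1}=\Pi_i$.

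\textbf{Step 3 (birationality for $s\leq d-2$).} Set $e:=d-s-1\geq 1$, and use the coordinate permutation swapping $x_i$ and $x_0$ on $\P^n$ (which induces an automorphism of $V_d^n$ sending $e_{\underline{i}}\mapsto e_{\underline{0}}$) to reduce to $i=0$. Since $\nu_d^n$ is an isomorphism onto $V_d^n$, it suffices to show that
$$
\gamma \;:=\; \Gamma_0^s\circ\nu_d^n \;:\; \P^n\dasharrow \P^{N(n,d,s)}
$$
is birational onto its image. Explicitly, $\gamma$ sends $[x_0:\cdots:x_n]$ to the tuple of those degree-$d$ monomials in $x_0,\dots,x_n$ whose $x_0$-degree is at most $e$. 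Because $e\geq 1$, each of the $n+1$ monomials
$$
x_n^d,\; x_0 x_n^{d-1},\; x_1 x_n^{d-1},\; \dots,\; x_{n-1} x_n^{d-1}
$$
has $x_0$-degree $\leq 1\leq e$ and therefore appears among the coordinates of $\gamma$. Taking their ratios recovers $[x_0:\cdots:x_n]$, giving a rational section. Hence $\gamma$, and therefore $\Gamma_0^s$, is birational onto its image.

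The only real obstacle is the bookkeeping in Step~1; once the surviving coordinates are correctly identified via Proposition~\ref{oscsegver}, both assertions follow immediately, the second from the fact that the inequality $e\geq 1$ forces the coordinates $z_{(j,n,\dots,n)}$ to survive and reconstruct the point of $\P^n$.
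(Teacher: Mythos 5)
Your proof is correct and follows essentially the same route as the paper's: identify the surviving coordinates $z_J$ with $d(I,J)>s$ via Proposition~\ref{oscsegver}, note that for $s=d-1$ these are exactly the indices avoiding $i$, and for $s\leq d-2$ compose with a linear projection onto $n+1$ surviving monomials of the form $y^{d}$, $y^{d-1}x_j$ to recover the identity on $\P^n$. The only cosmetic differences are that you normalize $i=0$ and use $x_n$ as the auxiliary variable (the paper assumes $i\neq 0$ and uses $x_0$), and you treat all $s\leq d-2$ directly rather than reducing to $s=d-2$ via the factorization $\Gamma_i^{d-2}\Rightarrow\Gamma_i^j$.
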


\begin{proof}
The case $s=d-1$ follows from Proposition~\ref{oscsegver} and the expression in \eqref{projonepoint} above, observing that, for any $J\in\Lambda_{n,d}$, 
$$
d(J,(i,\dots,i))=d \Leftrightarrow i\notin J.
$$ 

Since $\Gamma_i^{d-2}$ factors through $\Gamma_i^{j}$ for every $0\leq j\leq d-3$, it is enough to prove  birationality of $\Gamma_i^{d-2}$.
We may assume that $i\neq 0$, and consider the collection of indices
$$
J_0=(0,\dots,0,0),J_1=(0,\dots,0,1),\dots,J_{n}=(0,\dots,0,n)\in \Lambda_{(n,d)}.
$$
Note that $d(J_j,(i,\dots,i))\geq d-1$ for any $j\in\{1, \dots, n\}$. 
So we can define the linear projection
\begin{align*}
\gamma:\P^{N(n,d,s)} & \dasharrow \P^{n} .\\
(z_J)_{J \: |\: d(I,J)>d-2} &\mapsto (z_{J_0},\dots,z_{J_{n}})
\end{align*}
The composition 
\begin{align*}
\gamma\circ \Gamma_i^{d-2}\circ \nu_{d}^{n}:\mathbb{P}^n&\dasharrow\mathbb{P}^n\\
(x_0:\cdots:x_{n})&\mapsto (x_0^{d-1}x_0:\cdots x_0^{d-1}x_{n})=(x_0:\cdots:x_{n}) 
\end{align*}
is the identity, and thus $\Gamma_i^{d-2}$ is  birational.
\end{proof}

Now we turn to Segre-Veronese varieties. 
Let $SV_{\pmb{d}}^{\pmb{n}}\subset \P^{N(\pmb{n},\pmb{d})}$ be a Segre-Veronese variety, and consider a coordinate point
$$
e_I = e_{i_1}^{d_1}\otimes e_{i_2}^{d_2}\otimes\dots\otimes e_{i_r}^{d_r}\in SV_{\pmb{d}}^{\pmb{n}},
$$
with $0\leq i_j\leq n_j$, $I=\big((i_1,\dots,i_1),\dots, (i_r,\dots,i_r)\big)$. 
We write $(z_I)_{I\in \Lambda}$ for the coordinates in $\mathbb{P}^{N(\pmb{n},\pmb{d})}$.
Recall from Proposition~\ref{oscsegver} that the linear projection of $SV_{\pmb{d}}^{\pmb{n}}$ 
from the osculating space $T_{e_I}^s$ of order $s$ at $e_I$ is given by 
\begin{align}\label{eq:osc_proj}
\Pi_{T_{e_I}^s}:SV_{\pmb{d}}^{\pmb{n}}&\dasharrow \P^{N(\pmb{n},\pmb{d},s)}\\
(z_J)
&\mapsto (z_J)_{J\in\Lambda \: | \: d(I,J)>s}\nonumber
\end{align}
for every $s\leq d -1$.


In order to study the fibers of $\Pi_{T_{e_I}^s}$, we define auxiliary rational maps 
$$
\Sigma_l:SV_{\pmb{d}}^{\pmb{n}}\dasharrow \P^{N_l}
$$
for each $l\in \{1,\dots, r\}$ as follows. The map $\Sigma_1$ is the composition of the product map 
$$
\Gamma_{i_1}^{d_1-2} \times \prod_{j=2}^r \Pi_{i_j}:
V_{d_1}^{n_1}\times \dots \times V_{d_r}^{n_r}
\dasharrow  \mathbb P^{N(n_1,d_1,d_1-2)} \times  \prod_{j=2}^r \P^{N(n_j-1,d_j)}
$$ 
with the Segre embedding 
$$
\mathbb P^{N(n_1,d_1,d_1-2)} \times  \prod_{j=2}^r \P^{N(n_j-1,d_j)}\hookrightarrow \P^{N_1}.
$$
The other maps $\Sigma_l$, $2\leq l\leq r$, are defined analogously. 
In coordinates we have:
\begin{align}\label{eq_sigmal}
\Sigma_l:SV_{\pmb{d}}^{\pmb{n}}&\dasharrow \P^{N_l} \ , \\
(z_J)
&\mapsto (z_J)_{J\in\Lambda_l}\nonumber
\end{align}
where $\Lambda_l=\big\{J=(J^1,\dots,J^r)\in \Lambda \ \big| \ d(J^l,(i_l,\dots,i_l))\geq d_l-1 \text{ and }
i_j\not\in J^j \text{ for } j\neq l\}.$


\begin{Proposition}\label{projosconept}
Consider the projection of the Segre-Veronese variety $SV_{\pmb{d}}^{\pmb{n}}\subset \P^{N(\pmb{n},\pmb{d})}$ from 
the osculating space $\Pi_{T_{e_I}^s}$ of order $s$ at the point 
$e_I = e_{i_1}^{d_1}\otimes e_{i_2}^{d_2}\otimes\dots\otimes e_{i_r}^{d_r}\in SV_{\pmb{d}}^{\pmb{n}}$, $0\leq s\leq d-1$:
$$
\Pi_{T_{e_I}^s}:SV_{\pmb{d}}^{\pmb{n}} \dasharrow \P^{N(\pmb{n},\pmb{d},s)}.
$$
Then $\Pi_{T_{e_I}^s}$ is birational for any $s\leq d-2$, while $\Pi_{T_{e_I}^{d-1}}$ fits in the following commutative diagram:
\[
\begin{tikzpicture}[xscale=6.5,yscale=-1.7]
    \node (A0_0) at (0, 0) {$\mathbb{P}^{n_1}\times\dots\times\mathbb{P}^{n_r}$};
    \node (A0_1) at (1, 0) {$SV_{\pmb{d}}^{\pmb{n}}\subseteq \mathbb{P}^{N(\pmb{d},\pmb{n})}$};
    \node (A1_0) at (0, 1) {$\mathbb{P}^{n_1-1}\times\dots\times\mathbb{P}^{n_r-1}$};
    \node (A1_1) at (1, 1) {$SV_{\pmb{d}}^{\pmb{n}-\pmb{1}}\subseteq \mathbb{P}^{N(\pmb{d},\pmb{n}-\pmb{1})}$};
    \path (A0_0) edge [->]node [auto] {$\scriptstyle{\sigma\nu_{\pmb{d}}^{\pmb{n}}}$} (A0_1);
    \path (A1_0) edge [->]node [auto] {$\scriptstyle{\sigma\nu_{\pmb{d}}^{\pmb{n}-\pmb{1}}}$} (A1_1);
    \path (A0_1) edge [->,dashed]node [auto] {$\scriptstyle{\Pi_{T_{e_I}^{d -1}}}$} (A1_1);
    \path (A0_0) edge [->,dashed]node [auto,swap] {$\scriptstyle{\pi_{i_1}\times\dots\times \pi_{i_r}}$} (A1_0);
  \end{tikzpicture} 
\]
where $\pmb{n}-\pmb{1}=(n_1-1,\dots,n_r-1).$
Furthermore, the closure of the fiber of $\Pi_{T_{e_I}^{d -1}}$ is the Segre-Veronese variety 
$SV_{\pmb{d}}^{1,\dots,1} $.
\end{Proposition}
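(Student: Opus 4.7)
The plan is to reduce to the two extreme cases $s = d-1$ and $s = d-2$ and treat each directly. Since $T^s_{e_I}\subseteq T^{s'}_{e_I}$ whenever $s\leq s'$, the description~\eqref{eq:osc_proj} shows that $\Pi_{T^{s'}_{e_I}}$ is a further linear projection of $\Pi_{T^s_{e_I}}$; hence birationality for $s=d-2$ automatically implies birationality for all $s\leq d-2$. So it suffices to settle $s=d-2$ (birationality) and $s=d-1$ (commutative diagram and fiber).

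For $s=d-1$, I would apply~\eqref{eq:osc_proj}: the retained coordinates are exactly those $z_J$ with $d(I,J)=d$, i.e.\ with $i_l\notin J^l$ for every $l$. Pulling back along $\sigma\nu_{\pmb d}^{\pmb n}$, these are the monomials in the $e_k$'s that avoid every $e_{i_l}$, which coincide with the pullback of the standard coordinates on $SV_{\pmb d}^{\pmb n-\pmb 1}$ via $\pi_{i_1}\times\dots\times \pi_{i_r}$. This identifies the two compositions in the square and yields the diagram. The generic fiber of each $\pi_{i_l}$ is the line through $[e_{i_l}]$, so the generic fiber of the product map is $(\mathbb{P}^1)^r$. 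Since $\mathcal{O}_{\mathbb{P}^{\pmb n}}(d_1,\dots,d_r)$ restricts to the Segre-Veronese polarization of multidegree $\pmb d$ on such an $(\mathbb{P}^1)^r$, the closure of the fiber of $\Pi_{T^{d-1}_{e_I}}$ is precisely $SV^{1,\dots,1}_{\pmb d}$.

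For $s=d-2$, I would work with the auxiliary maps $\Sigma_l$ from~\eqref{eq_sigmal}. For any $J\in \Lambda_l$ one has
$$
d(I,J)=d(I^l,J^l)+\sum_{j\neq l}d(I^j,J^j)\geq (d_l-1)+\sum_{j\neq l}d_j=d-1>d-2,
$$
so the coordinates of $\Sigma_l$ form a subset of those of $\Pi_{T^{d-2}_{e_I}}$, and $\Sigma_l$ factors through $\Pi_{T^{d-2}_{e_I}}$. By Lemma~\ref{projoscveronese}, the $l$-th component $\Gamma^{d_l-2}_{i_l}$ of $\Sigma_l$ is birational, while the other components are linear projections $\mathbb{P}^{n_j}\dasharrow\mathbb{P}^{n_j-1}$; hence the image of $\Sigma_l$ is birational to $\mathbb{P}^{n_l}\times\prod_{j\neq l}\mathbb{P}^{n_j-1}$, and the $l$-th factor of a general point $([x_1],\dots,[x_r])\in SV^{\pmb n}_{\pmb d}$ can be reconstructed from $\Sigma_l$. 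Letting $l$ vary, the combined map $\Sigma_1\times\cdots\times\Sigma_r$ recovers every factor of a general point, so it is birational onto its image. As it factors through $\Pi_{T^{d-2}_{e_I}}$, the latter must itself be birational.

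The main subtlety I expect is the reconstruction step: one must justify cleanly that the birational inverse of $\Gamma^{d_l-2}_{i_l}$, now coupled diagonally with linear projections inside the Segre embedding defining $\Sigma_l$, really extracts the $l$-th factor of a general point of $SV^{\pmb n}_{\pmb d}$. This reduces to describing the image of $\Sigma_l$ as a Segre product of the images of its components and using Lemma~\ref{projoscveronese}, which is conceptually routine but deserves to be spelled out with care.
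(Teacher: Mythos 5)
Your proposal is correct and follows essentially the same route as the paper: reduce to $s=d-2$, observe that each $\Sigma_l$ factors through $\Pi_{T_{e_I}^{d-2}}$, and use Lemma~\ref{projoscveronese} to see that the $l$-th factor of a general point is determined by $\Sigma_l$, hence all factors are determined as $l$ varies. The ``reconstruction step'' you flag is handled in the paper exactly as you suggest, by describing the general fiber of $\Sigma_l$ explicitly as $\sigma\nu_{\pmb d}^{\pmb n}\bigl(\langle y_1,e_{i_1}\rangle\times\dots\times\{y_l\}\times\dots\times\langle y_r,e_{i_r}\rangle\bigr)$ and intersecting over $l$.
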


\begin{proof}
The case $s=d-1$ follows from the expressions in \eqref{projonepoint} and \eqref{eq:osc_proj}, and Lemma~\ref{projoscveronese}.

Since $\Pi_{T_{e_I}^{d-2}}$ factors through $\Pi_{T_{e_I}^{j}}$ for every $0\leq j\leq d-3$, it is enough to prove  birationality of $\Pi_{T_{e_I}^{d-2}}$.

First note that $\Pi_{T_{e_I}^{d-2}}$ factors the map $\Sigma_l$ for any $l=1,\dots,r$. 
This follows from the expressions in \eqref{eq:osc_proj} and \eqref{eq_sigmal}, observing that 
$$
J=(J^1,\dots,J^r) \in \Lambda_l\Rightarrow 
d(J,I)\geq d_l-1+\sum_{j\neq l} d_j =d-1>d-2.
$$
We write $\tau_l:\mathbb P^{N(\pmb{n},\pmb{d},d-2)}\dasharrow \mathbb{P}^{N_l}$ for the projection making the 
following diagram  commute: 
\[
  \begin{tikzpicture}[xscale=3.5,yscale=-1.5]
    \node (A0_0) at (0, 0) {$SV_{\pmb{d}}^{\pmb{n}}$};
    \node (A1_0) at (1, 1) {$\mathbb{P}^{N_l}$};
    \node (A1_1) at (1, 0) {$\mathbb P^{N(\pmb{n},\pmb{d},d-2)}$};
    \path (A0_0) edge [->,swap, dashed] node [auto] {$\scriptstyle{\Sigma_l}$} (A1_0);
    \path (A1_1) edge [->, dashed] node [auto] {$\scriptstyle{\tau_l}$} (A1_0);
    \path (A0_0) edge [->, dashed] node [auto] {$\scriptstyle{\Pi_{T_{e_I}^{d-2}}}$} (A1_1);
  \end{tikzpicture}
\]

Take a general point
$$
x\in \overline{\Pi_{T_{e_I}^{d-2}}\left( SV_{\pmb{d}}^{\pmb{n}} \right)}\subseteq\mathbb P^{N(\pmb{n},\pmb{d},d-2)},
$$ 
and set $x_l=\tau_l(x)$, $l=1,\dots,r$. 
Denote by $F\subset \mathbb P^{\pmb{n}} $ the closure of the  fiber of $\Pi_{T_{e_I}^{d-2}}$
over $x$, and by $F_l$ the closure of the  fiber of $\Sigma_l$ over $x_l$. 
Let $y\in F\subset F_l$ be a general point, and write
$y=\sigma\nu^{\pmb{n}}_{\pmb{d}}(y_1,\dots,y_r)$, with $y_j\in \mathbb{P}^{n_j}, j=1,\dots,r$. 
By Lemma \ref{projoscveronese}, $F_l$ is the image under $\sigma\nu^{\pmb{n}}_{\pmb{d}}$ of 
$$
\langle y_1,e_{i_1}\rangle \times \dots \times \langle y_{l-1},e_{i_{l-1}}\rangle \times y_l \times
\langle y_{l+1},e_{i_{l+1}}\rangle \times \dots \times\langle y_r,e_{i_r}\rangle
\subset \mathbb{P}^{\pmb{n}},
$$
$l=1,\dots,r$.  It follows that $F=\{y\}$, and so
$\Pi_{T_{e_I}^{d-2}}$ is birational.
\end{proof}

Next study linear projections from the span of several osculating spaces at coordinate points, and investigate 
when they are birational.

\medskip

We start with the case of a Veronese variety $V_{d}^{n}\subset \mathbb{P}^{N(n,d)}$, with coordinate points 
$e_{\underline{i}}=e_{(i,\dots,i)}\in V_{d}^{n}$, $i\in\{0,1,\dots,n\}$.
For $m\in \{1, \dots, n\}$, let $\pmb{s}=(s_0,\dots,s_m)$ be an $(m+1)$-uple of positive integers, and set $s=s_0+\dots+s_m$.
Let $e_{\underline{i}_0}, \dots, e_{\underline{i}_{m}}\in V_{d}^{n}$ be distinct coordinate points, and denote by 
$T_{e_{\underline{i}_{0}}, \dots, e_{\underline{i}_{m}}}^{s_0,\dots, s_m}\subset \mathbb{P}^{N(n,d)}$ the linear span 
$\left\langle T_{e_{\underline{i}_{0}}}^{s_0},\dots,T_{e_{\underline{i}_{m}}}^{s_m}\right\rangle$.
By Proposition~\ref{oscsegver}, the projection of $V_{d}^{n}$ from 
$T_{e_{\underline{i}_{0}}, \dots, e_{\underline{i}_{m}}}^{s_0,\dots, s_m}\subset \mathbb{P}^{N(n,d)}$ is given by:
\begin{align}
\Gamma^{s_0,\dots, s_m}_{e_{\underline{i}_{0}}, \dots, e_{\underline{i}_{m}}}:V_{d}^{n}&\dasharrow \P^{N(n,d,\pmb{s})},\\
(z_I)_{I\in \Lambda_{n,d}}
&\mapsto (z_J)_{J\in \Lambda_{n,d}^{\pmb{s}}}\nonumber
\end{align}
whenever $\Lambda_{n,d}^{\pmb{s}}=\{J\in \Lambda_{n,d} \ |\  d\big(J,(j,\dots,j)\big)>s_j \text{ for } j=0,\dots,m\}$ is not empty.

\begin{Lemma}\label{projoscveroneseseveralpoints}
Let the notation be as above, and assume that  $d\geq 2$ and $0\leq s_j\leq d-2$ for $j=0,\dots,m$.
\begin{itemize}
\item[(\textit{a})] If $n\leq d$ and $s \leq n(d-1)-2$, then $\Gamma^{s_0,\dots, s_m}_{e_{\underline{i}_{0}}, \dots, e_{\underline{i}_{m}}}$ is birational onto its image.
\item[(\textit{b})] If $n\leq d$ and $s = n(d-1)-1$, then $\Gamma^{s_0,\dots, s_m}_{e_{\underline{i}_{0}}, \dots, e_{\underline{i}_{m}}}$ is a constant map.
\item[(\textit{c})] If $n> d$, then  $\Gamma^{d-2,\dots, d-2}_{e_{\underline{i}_{0}}, \dots, e_{\underline{i}_{n}}}$ is birational onto its image.
\end{itemize}
\end{Lemma}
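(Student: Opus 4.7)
The plan is to reduce Lemma~\ref{projoscveroneseseveralpoints} to a combinatorial connectivity statement. By the symmetry of $V_d^n$ under permutations of coordinates I assume $\underline{i}_j=(j,\dots,j)$ for $j=0,\dots,m$. Set $c_j:=d-1-s_j\geq 1$. Proposition~\ref{oscsegver} then identifies $\Lambda_{n,d}^{\pmb s}$ with the set of $J\in\Lambda_{n,d}$ in which the value $j$ appears at most $c_j$ times for each $j\in\{0,\dots,m\}$, the entries $>m$ being unconstrained. Composing with $\nu_d^n$, the map in question becomes the monomial map
$$
\Phi:\mathbb{P}^n\dashrightarrow \mathbb{P}^{N(n,d,\pmb s)},\qquad [x_0:\cdots:x_n]\longmapsto (x^J)_{J\in \Lambda_{n,d}^{\pmb s}},
$$
so the statements about $\Gamma^{s_0,\dots,s_m}_{e_{\underline{i}_0},\dots,e_{\underline{i}_m}}$ translate into statements about $\Phi$.

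For parts (\textit{a}) and (\textit{c}) I would introduce the \emph{swap graph} $G$ on $\{0,1,\dots,n\}$, where $a\sim b$ whenever there exist $J,J'\in\Lambda_{n,d}^{\pmb s}$ differing in exactly one entry, with that entry being $a$ in $J$ and $b$ in $J'$. Each such edge produces a well-defined rational function $z_{J'}/z_J=x_b/x_a$ on the image of $\Phi$, so if $G$ is connected then all ratios $x_b/x_a$ are recoverable from the image. This supplies a rational inverse to $\Phi$, and hence birationality of $\Gamma$.

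The core of the proof is then to verify that $G$ is connected (in fact complete). For (\textit{c}) we have $c_j=1$ for every $j\in\{0,\dots,n\}$, so $\Lambda_{n,d}^{\pmb s}$ consists of $d$-element subsets of $\{0,\dots,n\}$; since $n\geq d+1$, for any $a\neq b$ one can pick a $d$-subset of $\{0,\dots,n\}\setminus\{b\}$ containing $a$, which gives the edge immediately. In case (\textit{a}) I split into subcases: when at least one of $a,b$ exceeds $m$, copies of the uncapped value serve as witnesses; when both $a,b\leq m$ but $n>m$, any $c>m$ provides cheap filler. The delicate subcase is when both $a,b\leq m$ and $n=m$, so every value is capped: here one must produce $J\in\Lambda_{n,d}^{\pmb s}$ containing at least one $a$ and at most $c_b-1$ copies of $b$, which amounts to distributing $d-1$ balls in the boxes $\{0,\dots,n\}$ under caps reduced by $1$ at both $a$ and $b$. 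This is possible iff $\sum_j c_j\geq d+1$, and the hypothesis $s\leq n(d-1)-2$ yields exactly $\sum_j c_j=(n+1)(d-1)-s\geq d+1$. This sharp counting is the main obstacle and the only place the bound in (\textit{a}) is used to the full.

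For part (\textit{b}), the constraint $s_j\leq d-2$ together with $(m+1)(d-2)\geq s=n(d-1)-1$ forces $m=n$; then every value is capped and $\sum_j c_j=(n+1)(d-1)-s=d$. Consequently the number of solutions to $\sum a_j=d$ with $0\leq a_j\leq c_j$ collapses to the single solution $a_j=c_j$, so $\Lambda_{n,d}^{\pmb s}$ is a singleton and $\Phi$ (hence $\Gamma$) is constant.
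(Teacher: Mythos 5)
Your proof is correct, and it diverges from the paper's in an interesting way. Part (\textit{b}) coincides with the paper's argument: both show that the cap capacities $c_j=d-1-s_j$ sum to exactly $d$, so $\Lambda_{n,d}^{\pmb s}$ is a singleton. Part (\textit{c}) is also essentially the paper's argument in different clothing: the paper recovers the sub-tuples $(x_i)_{i\in K}$ for every $(n-d+1)$-element subset $K$ from ratios of coordinates whose exponent vectors differ in a single entry, which is precisely a family of edges of your swap graph. The genuine difference is in part (\textit{a}). There the paper builds $n+1$ indices $J_0,\dots,J_n\in\Lambda_{n,d}^{\pmb s}$, with $J_j$ containing each value other than $j$ exactly once (this is where $n\le d$ is used) together with $d-n$ common filler entries chosen to respect the caps (this is where $s\le n(d-1)-2$ is used), and then composes with a linear projection so that the resulting map $\mathbb P^n\dasharrow\mathbb P^n$ is the standard Cremona transformation, whose birationality gives the claim. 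You instead recover the ratios $x_b/x_a$ directly from pairs of surviving monomials differing by a single swap, and the sharp capacity count $\sum_j c_j=(n+1)(d-1)-s\ge d+1$ plays for you exactly the role that the choice of filler entries plays for the paper. Your route is more uniform --- it handles (\textit{a}) and (\textit{c}) by one mechanism, and in fact never uses the hypothesis $n\le d$ in (\textit{a}), so it slightly generalizes the statement --- whereas the paper's construction has the merit of exhibiting an explicit left inverse (a coordinate projection followed by a Cremona map). Both are complete proofs.
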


\begin{proof}
Assume that $n\leq d$ and $s \leq n(d-1)-2$.
In order to prove that $\Gamma^{s_0,\dots, s_m}_{e_{\underline{i}_{0}}, \dots, e_{\underline{i}_{m}}}$ is birational, 
we will exhibit $J_0,\dots,J_{n}\in \Lambda_{n,d}^{\pmb{s}}$ , and linear projection 
\begin{align}\label{proj_cremona}
\gamma: \mathbb{P}^{N(n,d)}&\dasharrow \P^{n}\\
(z_I)_{I\in \Lambda_{n,d}^{\pmb{s}}}
&\mapsto (z_{J_j})_{j=0,\dots,n}\nonumber
\end{align}
such that the composition $\gamma\circ\Gamma^{s_0,\dots, s_m}_{e_{\underline{i}_{0}}, \dots, e_{\underline{i}_{m}}}\circ \nu_{d}^{n}:\P^{n}\dasharrow \P^{n}$
is the standard Cremona transformation of $\mathbb{P}^n$.
The $d$-uples $J_j\in \Lambda_{n,d}^{\pmb{s}}$ are constructed as follows. 
Since $n\leq d$ we can take $n$ of the coordinates of $J_j$ to be $0,1,\dots, \widehat{j},\dots, n$. 
The condition  $s \leq n(d-1)-2$ assures that we can complete the $J_j$'s by choosing $d-n$ common coordinates 
in such a way that, for every $i,j\in \{0,\dots, n\}$, we have $d\big(J_j,(i,\dots,i)\big)>s_i$
(i.e., $J_j$ has at most $(d-s_i-1)$ coordinates equal to to $i$). 
This gives $J_j\in \Lambda_{n,d}^{\pmb{s}}$ for every $j\in \{0,\dots, n\}$.
For the linear projection \eqref{proj_cremona} given by these $J_j$'s, we have that 
$\gamma\circ\Gamma^{s_0,\dots, s_m}_{e_{\underline{i}_{0}}, \dots, e_{\underline{i}_{m}}}\circ \nu_{d}^{n}:\P^{n}\dasharrow \P^{n}$
is the standard Cremona transformation of $\mathbb{P}^n$.

Now assume that $n\leq d$ and $s = n(d-1)-1$. 
If $J\in \Lambda_{n,d}^{\pmb{s}}$,
then $J$ has at most $d-s_i-1$ coordinates equal to $i$ for any $i\in \{0,\dots,n\}$. Since 
$$
\sum_{j=0}^{n} \left( d-s_j-1\right)=(n+1)(d-1)-s=d,
$$
there is only one possibility for $J$, i.e., $\Lambda_{n,d}^{\pmb{s}}$ has only one element, and so
$\Gamma^{s_0,\dots, s_m}_{e_{\underline{i}_{0}}, \dots, e_{\underline{i}_{m}}}$ is a constant map.

Finally, assume that $n>d.$ 
Set $K_0=\{0,\dots,{n-d}\}$. For any $j\in K_0$, set
$$
(J_{K_0})_j:=(j, {n-d+1},\dots, {n}),
$$
and note that $d\big((J_{K_0})_j,(i,\dots,i)\big)>d-2$ for every $i\in \{0,\dots, n\}$.
Thus $(J_{K_0})_j\in \Lambda_{n,d}^{\pmb{d-2}}$ for every $j\in {K_0}$. 
So we can define the linear projection
\begin{align*}
\gamma_{K_0}: \P^{N(n,d,\pmb{d-2})}&\dasharrow \P^{n-d}.\\
(z_I)_{I\in \Lambda_{n,d}^{\pmb{d-2}}}
&\mapsto (z_{(J_{K_0})_j})_{j\in {K_0}}
\end{align*}
The composition $\gamma_{K_0}\circ\Gamma^{d-2,\dots, d-2}_{e_{\underline{i}_{0}}, \dots, e_{\underline{i}_{n}}}\circ \nu_{d}^{n}:\P^{n}\dasharrow \P^{n-d}$
is the linear projection given by 
\begin{align*}
\gamma_{K_0}\circ\Gamma^{d-2,\dots, d-2}_{e_{\underline{i}_{0}}, \dots, e_{\underline{i}_{n}}}\circ \nu_{d}^{n}:\P^{n}&\dasharrow \P^{n-d}.\\
(x_i)_{i\in\{0,\dots,n\}}&\mapsto (x_i)_{i\in {K_0}}
\end{align*}
Analogously, for each subset $K\subset \{0,\dots,n\}$ with $n-d+1$ distinct elements,
we define a linear projection $\gamma_{K}: \P^{N(n,d,\pmb{d-2})}\dasharrow \P^{n-d}$ such that the composition 
$\gamma_{K}\circ\Gamma^{d-2,\dots, d-2}_{e_{\underline{i}_{0}}, \dots, e_{\underline{i}_{n}}}\circ \nu_{d}^{n}:\P^{n}\dasharrow \P^{n-d}$
is the linear projection given by
\begin{align*}
\gamma_{K}\circ\Gamma^{d-2,\dots, d-2}_{e_{\underline{i}_{0}}, \dots, e_{\underline{i}_{n}}}\circ \nu_{d}^{n}:\P^{n}&\dasharrow \P^{n-d}.\\
(x_i)_{i\in\{0,\dots,n\}}&\mapsto (x_i)_{i\in {K}}
\end{align*}
This shows that $\Gamma^{d-2,\dots, d-2}_{e_{\underline{i}_{0}}, \dots, e_{\underline{i}_{n}}}$ is birational. 
\end{proof}

The following is an immediate consequence of Lemma \ref{projoscveroneseseveralpoints}.

\begin{Corollary}\label{projoscveroneseseveralpointscor}
Let the notation be as above, and assume that  $d\geq 2$. Then
\begin{itemize}
\item[(\textit{a})] $\Gamma^{d-2,\dots,d-2}_{e_{\underline{i}_{0}}, \dots, e_{\underline{i}_{n-1}}}$ is birational.
\item[(\textit{b})] If $n\geq 2$ then
$\Gamma^{d-2,\dots,d-2,\min\{n,d\}-2}_{e_{\underline{i}_{0}}, \dots, e_{\underline{i}_{n}}}$ is birational, while
$\Gamma^{d-2,\dots,d-2,\min\{n,d\}-1}_{e_{\underline{i}_{0}}, \dots, e_{\underline{i}_{n}}}$ is not.
\item[(\textit{c})] If $d\geq 3$ then
$\Gamma^{d-3,\dots,d-3,\min\{2n,d\}-2}_{e_{\underline{i}_{0}}, \dots, e_{\underline{i}_{n}}}$ is birational, while
$\Gamma^{d-3,\dots,d-3,\min\{2n,d\}-1}_{e_{\underline{i}_{0}}, \dots, e_{\underline{i}_{n}}}$ is not.
\end{itemize}
\end{Corollary}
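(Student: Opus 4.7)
The plan is a case analysis matching each projection $\Gamma$ in the statement against the appropriate case of Lemma~\ref{projoscveroneseseveralpoints}. Two general principles streamline the argument. First, if $\Gamma^{s_0,\dots,s_m}_{e_{\underline{i}_0},\dots,e_{\underline{i}_m}}$ is birational and $s'_j \leq s_j$ for each $j$, then $\Gamma^{s'_0,\dots,s'_m}_{e_{\underline{i}_0},\dots,e_{\underline{i}_m}}$ is also birational, since the former is a further linear projection of the latter and degrees multiply. Second, whenever some $s_j = d-1$, the osculating projection factors through $\Gamma^{d-1}_{i_j} = \Pi_{i_j} : V^n_d \dashrightarrow V^{n-1}_d$, whose generic fibers are rational curves (Lemma~\ref{projoscveronese}); moreover, by Proposition~\ref{oscsegver} the remaining osculating spaces $T^{s_l}_{e_{\underline{i}_l}}$, $l \neq j$, descend along $\Pi_{i_j}$ to the corresponding osculating spaces of $V^{n-1}_d$. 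Hence if this residual projection on $V^{n-1}_d$ is birational, the total composition has positive-dimensional fibers and fails to be birational.

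For part (a) the total osculating order is $s = n(d-2)$. When $n = 1$ birationality is immediate from Lemma~\ref{projoscveronese}; when $2 \leq n \leq d$ one has $s = n(d-1) - n \leq n(d-1) - 2$, so Lemma~\ref{projoscveroneseseveralpoints}(a) applies; when $n > d$, Lemma~\ref{projoscveroneseseveralpoints}(c) yields birationality of the projection from the span of all $n+1$ coordinate points, and the first principle delivers the statement. For part (b), in the birational case $s_n = \min\{n,d\} - 2$: if $n \leq d$ the total is $s = n(d-1) - 2$ and Lemma~\ref{projoscveroneseseveralpoints}(a) applies; if $n > d$ then $s_n = d-2$ so all $s_j = d - 2$ and Lemma~\ref{projoscveroneseseveralpoints}(c) applies. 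In the non-birational case $s_n = \min\{n,d\} - 1$: if $n \leq d$ then $s = n(d-1) - 1$ and Lemma~\ref{projoscveroneseseveralpoints}(b) forces a constant map; if $n > d$ then $s_n = d-1$ triggers the second principle, so it suffices to show that the residual projection on $V^{n-1}_d$ from its $n$ coordinate points at order $d-2$ is birational. This follows from Lemma~\ref{projoscveroneseseveralpoints}(a) when $n = d+1$ (as then $n(d-2) = (n-1)(d-1) - 2$) and from Lemma~\ref{projoscveroneseseveralpoints}(c) when $n > d+1$.

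Part (c) is strictly analogous, with $d-2$ replaced by $d-3$ on the bulk points and $\min\{n,d\}$ replaced by $\min\{2n,d\}$ at the distinguished one. In the birational case, $2n \leq d$ yields $s = n(d-1) - 2$ (Lemma~\ref{projoscveroneseseveralpoints}(a)), while $2n > d$ forces $s_n = d-2$ and the first principle reduces matters to $\Gamma^{d-2,\dots,d-2}_{e_{\underline{i}_0},\dots,e_{\underline{i}_n}}$, handled by Lemma~\ref{projoscveroneseseveralpoints}(a) or (c) depending on whether $n \leq d$ or $n > d$. In the non-birational case, Lemma~\ref{projoscveroneseseveralpoints}(b) settles $2n \leq d$, while $2n > d$ forces $s_n = d-1$, and the second principle reduces the problem to the residual projection of $V^{n-1}_d$ from $n$ coordinate points at order $d-3$; its total osculating order is $n(d-3)$, and the inequality $n(d-3) \leq (n-1)(d-1) - 2$ required by Lemma~\ref{projoscveroneseseveralpoints}(a) holds precisely when $2n > d$, so birationality follows from Lemma~\ref{projoscveroneseseveralpoints}(a) if $n \leq d+1$ and from Lemma~\ref{projoscveroneseseveralpoints}(c) if $n > d+1$. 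The one genuinely delicate point in the whole argument is the bookkeeping underlying the second principle: verifying, via Proposition~\ref{oscsegver}, that projection by $T^{d-1}_{e_{\underline{i}_n}}$ carries the remaining osculating spaces onto the corresponding osculating spaces of $V^{n-1}_d$, so that the residual map on $V^{n-1}_d$ really has the structure needed to invoke Lemma~\ref{projoscveroneseseveralpoints} again.
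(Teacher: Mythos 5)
Your two guiding principles are sound, and your handling of parts (a) and (b) and of all the non-birationality claims is essentially correct; the paper itself gives no written proof (it declares the corollary an immediate consequence of Lemma~\ref{projoscveroneseseveralpoints}), the intended route being direct substitution of the given orders into the Lemma. However, your argument for the birational claim in part (c) has a genuine gap in the range $d/2<n<d$. There $\min\{2n,d\}=d$, the orders are $(d-3,\dots,d-3,d-2)$, and you use the first principle to reduce to $\Gamma^{d-2,\dots,d-2}_{e_{\underline{i}_{0}},\dots,e_{\underline{i}_{n}}}$. That reduction loses too much: raising the $n$ orders $d-3$ to $d-2$ increases the total order from $n(d-3)+d-2=n(d-1)-2-(2n-d)$ to $(n+1)(d-2)=n(d-1)-2+(d-n)$, and the latter satisfies the hypothesis $s\le n(d-1)-2$ of Lemma~\ref{projoscveroneseseveralpoints}(a) only when $d\le n$. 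For $n<d<2n$ (for instance $(n,d)=(2,3)$, where you reduce the orders $(0,0,1)$ to $(1,1,1)$) the target of your reduction has total order $\ge n(d-1)-1$, so it is a constant map by Lemma~\ref{projoscveroneseseveralpoints}(b), or worse has empty index set; its failure to be birational says nothing about the original map. The fix is to drop the reduction and apply Lemma~\ref{projoscveroneseseveralpoints}(a) directly to the tuple $(d-3,\dots,d-3,d-2)$: its total is $n(d-3)+d-2\le n(d-1)-2$ precisely when $d\le 2n$, which is the case at hand (the reduction you propose is only needed, and only works, for $n>d$, via Lemma~\ref{projoscveroneseseveralpoints}(c)).

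Two smaller points. In the non-birational claims you invoke Lemma~\ref{projoscveroneseseveralpoints}(b) with distinguished order $\min\{n,d\}-1$, resp.\ $\min\{2n,d\}-1$; at the boundaries $n=d$ in part (b) and $2n=d$ in part (c) this order equals $d-1$, which falls outside the Lemma's standing hypothesis $s_j\le d-2$. Either note that the counting argument proving Lemma~\ref{projoscveroneseseveralpoints}(b) extends verbatim to $s_j\le d-1$, or absorb these boundary cases into your second principle. Finally, for the non-birationality statements you do not actually need the residual projection on $V^{n-1}_{d}$ to be birational: once the center contains $T^{d-1}_{e_{\underline{i}_{n}}}$, the map factors through $\Pi_{i_n}$, whose one-dimensional fibers already force the composition to have positive-dimensional fibers. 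Your verification of the residual's birationality is correct and does confirm the map is well defined, but it is more than is required.
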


Now we turn to Segre-Veronese varieties. 
Let $SV_{\pmb{d}}^{\pmb{n}}\subset \P^{N(\pmb{n},\pmb{d})}$ be a Segre-Veronese variety, 
and write $(z_I)_{I\in \Lambda_{n,d}}$ for coordinates in $\mathbb{P}^{N(n,d)}$. 
Consider the coordinate points $e_{I_0},e_{I_1},\dots,e_{I_{n_1}}\in SV_{\pmb{d}}^{\pmb{n}}$, where
$$
I_j=((j,\dots,j),\dots,(j,\dots,j))\in \Lambda.
$$
(Recall that $n_1\leq\dots \leq n_r$.)
Let $\pmb{s}=(s_0,\dots,s_m)$ be an $(m+1)$-uple of positive integers, and set $s=s_0+\dots+s_m$.
Denote by $T_{e_{I_0},\dots,e_{I_m}}^{s_0,\dots,s_m}\subset \P^{N(\pmb{n},\pmb{d})}$ the linear span of the osculating spaces 
$T_{e_{I_0}}^{s_0},\dots,T_{e_{I_m}}^{s_m}$.
By Proposition~\ref{oscsegver}, the projection of $SV_{\pmb{d}}^{\pmb{n}}$ from 
$T_{e_{I_0},\dots,e_{I_m}}^{s_0,\dots,s_m}$ is given by:
\begin{align}\label{eq:multi_osc_proj}
\Pi_{T_{e_{I_0},\dots,e_{I_m}}^{s_0,\dots,s_m}}:
SV_{\pmb{d}}^{\pmb{n}}&\dasharrow \P^{N(\pmb{d},\pmb{n},\pmb{s})}\\
(z_J)_{J \in \Lambda}
&\mapsto (z_J)_{J \in \Lambda^{\pmb{s}}} \nonumber
\end{align}
whenever $\Lambda^{\pmb{s}}\!=\!\{J \in \Lambda\: |\: d(I_j,J)>s_j \ \forall j\}$ is not empty. 

\begin{Proposition}\label{projoscseveralpoints} 
Let the notation be as above, and assume that $r,d\geq 2$. 
Then the projection  
$\Pi_{T_{e_{I_0},\dots,e_{I_{n_1-1}}}^{d-2,\dots,d-2}}:SV_{\pmb{d}}^{\pmb{n}} \dasharrow \P^{N(\pmb{d},\pmb{n},{\pmb{d-2}})}$ 
is birational.
\end{Proposition}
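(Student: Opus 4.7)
The plan is to construct an explicit rational left inverse of the composition $\Pi_{T_{e_{I_0},\dots,e_{I_{n_1-1}}}^{d-2,\dots,d-2}}\circ \sigma\nu_{\pmb{d}}^{\pmb{n}}$, in the spirit of Lemma~\ref{projoscveroneseseveralpoints}. By \eqref{eq:multi_osc_proj}, the surviving coordinates are indexed by
$$
\Lambda^{\pmb{d-2}}=\{J\in \Lambda\mid d(I_j,J)>d-2\text{ for }j=0,\dots,n_1-1\};
$$
unwinding Definition~\ref{distance}, these are the multi-indices $J=(J^1,\dots,J^r)$ whose entries, aggregated over all components, contain each value in $\{0,\dots,n_1-1\}$ at most once.

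The key step is to exhibit, for every $i\in\{1,\dots,r\}$ and every $k\in\{0,\dots,n_i\}$, a concrete element $K_{i,k}\in \Lambda^{\pmb{d-2}}$ whose evaluation picks out $x_{i,k}$ up to a multiplicative constant independent of $k$. I would set
$$
K_{i,k}=\bigl((n_1,\dots,n_1),\dots,(n_{i-1},\dots,n_{i-1}),(k,n_i,\dots,n_i),(n_{i+1},\dots,n_{i+1}),\dots,(n_r,\dots,n_r)\bigr),
$$
with one copy of $k$ and $d_i-1$ copies of $n_i$ in the $i$-th slot, and constant tuples $(n_j,\dots,n_j)$ elsewhere. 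The hypothesis $n_1\leq\dots\leq n_r$ gives $n_j\geq n_1$ for all $j$, so the only entry of $K_{i,k}$ that can fall in the forbidden range $\{0,\dots,n_1-1\}$ is $k$ itself (and only when $k\leq n_1-1$); hence $K_{i,k}\in\Lambda^{\pmb{d-2}}$. A direct computation from Notation~\ref{notationSV} yields
$$
z_{K_{i,k}}\bigl(\sigma\nu^{\pmb{n}}_{\pmb{d}}(x)\bigr)=x_{i,k}\cdot x_{i,n_i}^{d_i-1}\cdot \prod_{j\neq i}x_{j,n_j}^{d_j}
$$
for a general point $x=\bigl((x_{1,0}:\dots:x_{1,n_1}),\dots,(x_{r,0}:\dots:x_{r,n_r})\bigr)\in\mathbb{P}^{\pmb{n}}$.

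Because the coefficient on the right depends on $i$ but not on $k$, the linear projection $\gamma_i:\mathbb{P}^{N(\pmb{n},\pmb{d},\pmb{d-2})}\dasharrow\mathbb{P}^{n_i}$ extracting the coordinates $(z_{K_{i,0}}:\dots:z_{K_{i,n_i}})$ satisfies $\gamma_i\circ \Pi_{T_{e_{I_0},\dots,e_{I_{n_1-1}}}^{d-2,\dots,d-2}}\circ \sigma\nu^{\pmb{n}}_{\pmb{d}}=\pi_i$, the projection onto the $i$-th factor of $\mathbb{P}^{\pmb{n}}$. Assembling the $\gamma_i$, the product map $(\gamma_1,\dots,\gamma_r)\circ \Pi_{T_{e_{I_0},\dots,e_{I_{n_1-1}}}^{d-2,\dots,d-2}}\circ \sigma\nu^{\pmb{n}}_{\pmb{d}}$ is the identity on $\mathbb{P}^{\pmb{n}}$, forcing $\Pi_{T_{e_{I_0},\dots,e_{I_{n_1-1}}}^{d-2,\dots,d-2}}$ to be birational onto its image. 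The only subtlety is the combinatorial verification $K_{i,k}\in\Lambda^{\pmb{d-2}}$; the hypothesis $r\geq 2$ is essential there, as it provides the ``parking'' slots into which the remaining $d-1$ entries can be pushed at safe values $\geq n_1$.
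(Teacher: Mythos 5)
Your proof is correct, and it takes a genuinely more direct route than the paper's. The identification of the surviving coordinates is right: since $I_j$ has all entries equal to $j$, one has $d(I_j,J)=d-\#\{\text{entries of }J\text{ equal to }j\}$, so $\Lambda^{\pmb{d-2}}$ consists exactly of the $J$ containing each of $0,\dots,n_1-1$ at most once; and your $K_{i,k}$ clearly satisfy this because all their other entries are $n_j\geq n_1$, which lies outside the forbidden range. The paper proceeds quite differently: it introduces auxiliary projections $\Sigma_l$ that factor through $\Pi_{T^{d-2,\dots,d-2}_{e_{I_0},\dots,e_{I_{n_1-1}}}}$, identifies the restriction of $\Sigma_l$ to the $l$-th factor with the Veronese osculating projection $\Gamma^{d_l-2,\dots,d_l-2}_{e_{\underline{0}},\dots,e_{\underline{n_1-1}}}$ (birational by Corollary~\ref{projoscveroneseseveralpointscor}, hence ultimately by the Cremona trick of Lemma~\ref{projoscveroneseseveralpoints}), and then recovers a general fiber of the full projection as the intersection of the fibers of the $\Sigma_l$, as in Proposition~\ref{projosconept}. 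Your construction of a global left inverse $(\gamma_1,\dots,\gamma_r)$ bypasses both the reduction to the Veronese case and the fiber-intersection argument, which is a real simplification; what the paper's factorwise setup buys is reusability (the $\Sigma_l$ formalism is also what drives Proposition~\ref{projosconept}) and the sharper Veronese statements of Lemma~\ref{projoscveroneseseveralpoints}, which are needed elsewhere. Two small remarks. First, your closing claim that $r\geq 2$ is essential for the combinatorial verification is not accurate: for $r=1$ the same indices $(k,n_1,\dots,n_1)$ lie in $\Lambda^{\pmb{d-2}}$ (the safe value is $n_i\geq n_1$, which exists regardless of $r$), and the construction then recovers Corollary~\ref{projoscveroneseseveralpointscor}(a); this does not affect the proof, since the proposition assumes $r\geq 2$ anyway. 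Second, if one normalizes the basis of $\Sym^{d_i}V_i$ so that monomial coefficients appear, then $z_{K_{i,n_i}}$ differs from the other $z_{K_{i,k}}$ by a factor $d_i$; this is harmless (it composes $\pi_i$ with a diagonal automorphism of $\P^{n_i}$), and in the coordinate convention actually used in the proof of Proposition~\ref{oscsegver} no such coefficients occur, so your formula stands as written.
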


\begin{proof}
For each $l\in\{1,\dots, r\}$, set 
$$
\Lambda_l=\left\{J\!=\!(J^1,\dots,J^r)\in \Lambda \: \big| \:
\begin{cases} 
0,\dots, n_1-1\not \in J^j \text{ if } j\neq l \\
d\big(J^l,(i,\dots, i)\big)\geq d_l-1 \ \forall i\in \{0,\dots, n_1-1\}
\end{cases}
\right\},
$$
and consider the linear projection 
\begin{align}\label{Sigma_l}
\Sigma_l:SV_{\pmb{d}}^{\pmb{n}}
&\dasharrow \mathbb P^{N_l}.\\
{\left(z_J\right)}_{J\in\Lambda}&\mapsto
{\left( z_J\right)}_{J\in\Lambda_l}\nonumber
\end{align}
Note that $\Lambda_l\subset \Lambda^{\pmb{d-2}}$, and so there is a linear projection
$\tau_l:\P^{N(\pmb{d},\pmb{n},\pmb{d-2})}\dasharrow \mathbb P^{N_l}$
such that $\Sigma_l=\tau_l\circ \Pi_{T^{d-2,\dots,d-2}_{p_0,\dots,p_{n_1-1}}}$.

The restriction of $\Sigma_l\circ \sigma\nu^{\pmb{n}}_{\pmb{d}}$ to 
$$
\{pt\} \times \dots \{pt\} \times \P^{n_l} \times \{pt\} \times \dots \{pt\}
$$
is isomorphic to the osculating projection 
$$
\Gamma^{d_l-2,\dots,d_l-2}_{e_{\underline{i}_{0}}, \dots, e_{\underline{i}_{n_1-1}}}:V_{d_l}^{n_l}\dasharrow \P^{N(n_l,d_l,\pmb{d_l-2})}.
$$
This is birational by Corollary~\ref{projoscveroneseseveralpointscor}. 
For $j\neq l$, the restriction of $\Sigma_l\circ \sigma\nu^{\pmb{n}}_{\pmb{d}}$ to 
$$
\{pt\} \times \dots \{pt\} \times \P^{n_j} \times \{pt\} \times \dots \{pt\}
$$
is isomorphic to the projection with center $\left\langle e_0,\dots,e_{n_1-1} \right\rangle$.
Arguing as in the last part of the proof of Proposition \ref{projosconept}, we conclude that 
$\Pi_{T_{e_{I_0},\dots,e_{I_{n_1-1}}}^{d-2,\dots,d-2}}$ is birational.
\end{proof}

%
%

\section{Non-secant defectivity of Segre-Veronese varieties}\label{mainsec}

In this section we explain how osculating projections can be used to establish non-secant defectivity of Segre-Veronese varieties.
We start by recalling the definition of secant varieties and secant defectivity.

\begin{Definition}[Secant varieties]
Let $X\subset\P^N$ be a non-degenerate projective variety of dimension $n$. 
Consider the rational map $\alpha: X\times  \dots \times X \dasharrow \G(h-1,N)$
mapping $h$ general points to their linear span $\langle x_1, \dots , x_{h}\rangle$. Let 
$$
\Gamma_h(X)\subset X\times \dots \times X\times\G(h-1,N)
$$
be the closure of the graph of $\alpha$, with the natural projection $\pi_2:\Gamma_h(X)\to\G(h-1,N)$.
Set 
$$
\mathcal{S}_h(X)=\pi_2(\Gamma_h(X))\subset\G(h-1,N).
$$
Both $\Gamma_h(X)$ and $\mathcal{S}_h(X)$ are irreducible of dimension $hn$.  
Now consider the incidence variety 
$$
\mathcal{I}_h=\{(x,\Lambda) \: | \: x\in \Lambda\} \subset\P^N\times\G(h-1,N),
$$
and the associated diagram
$$
\xymatrix{
&\ \ \ \ \ \ {\mathcal{I}_h} \ \ \ \  \ar[dl]_{\pi_h}\ar[dr]^{\psi_h}&\\
\P^N&&\G(h-1,N).
}
$$
Note that
$\psi_h:\mathcal{I}_h\to\G(h-1,N)$  is a $\P^{h-1}$-bundle.
The variety
$$
(\psi_h)^{-1}(\mathcal{S}_h(X))\subset \mathcal{I}_h \subset\P^N\times\G(h-1,N)
$$
is an $(hn+h-1)$-dimensional variety with a $\P^{h-1}$-bundle structure over $\mathcal{S}_h(X)$.

The {\it $h$-secant variety} of $X$ is the variety
$$
\sec_{h}(X)=\pi_h\big((\psi_h)^{-1}(\mathcal{S}_h(X))\big)\subset\P^N.
$$

We say that $X$ is \textit{$h$-defective} if 
$$
\dim\sec_{h}(X)<\min\{\dim\Sec_{h}(X),N\}.
$$
\end{Definition}

Determining secant defectivity is a classical problem  in algebraic geometry. 
The following characterization of secant defectivity in terms of tangential projections is due to Chiantini and Ciliberto.

\begin{Definition}
Let $x_1,\dots,x_h\in X\subset\mathbb{P}^N$ be general points, with tangent spaces $T_{x_i}X$. 
We say that  the linear projection
$$
\tau_{X,h}:X\subseteq\mathbb{P}^N\dasharrow\mathbb{P}^{N_h}
$$
with center $\left\langle T_{x_1}X,\dots,T_{x_h}X\right\rangle$ is a \textit{general $h$-tangential projection}. 
\end{Definition}

\begin{Proposition}\cite[Proposition 3.5]{CC01}\label{cc}
Let $X\subset\P^N$ be a non-degenerate projective variety of dimension $n$, and $x_1,\dots,x_h\in X$  general points. Assume that 
$$
N-\dim(\left\langle T_{x_1}X,\dots,T_{x_h}X\right\rangle)-1\geq n.
$$
Then the general $h$-tangential projection $\tau_{X,h}:X\dasharrow X_h$ is generically finite if and only if 
$X$ is not $(h+1)$-defective.
\end{Proposition}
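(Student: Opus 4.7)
The plan is to invoke Terracini's lemma and then carry out a dimension count matching the image of $\tau_{X,h}$ with the expected jump of secant dimension. By Terracini's lemma, for general $x_1,\dots,x_k \in X$ one has $\dim \sec_k(X) = \dim \langle T_{x_1}X,\dots,T_{x_k}X\rangle$. I would apply this at $k = h$ and $k = h+1$: write $c := \dim \sec_h(X)$, $c' := \dim \sec_{h+1}(X)$, and $C := \langle T_{x_1}X,\dots,T_{x_h}X\rangle$, the center of the projection $\tau_{X,h}: X \dasharrow \mathbb{P}^{N-c-1}$.

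The core computation is then the following. For a general $x_{h+1}\in X$, the tangent space to $X_h$ at $\tau_{X,h}(x_{h+1})$ is the image of $T_{x_{h+1}}X$ under the projection from $C$. Since $T_{x_{h+1}}X$ is a linear subspace of $\mathbb{P}^N$, the standard ``join minus center'' formula yields
$$
\dim X_h \;=\; \dim \langle C, T_{x_{h+1}}X\rangle - c - 1 \;=\; c' - c - 1.
$$
The hypothesis $N - c - 1 \geq n$ guarantees that the target $\mathbb{P}^{N-c-1}$ has dimension at least $n$, so the bound $\dim X_h \leq n$ is not forced to be strict by ambient reasons. Hence $\tau_{X,h}$ is generically finite iff $\dim X_h = n$, equivalently iff $c' = c + n + 1$; that is, iff the jump from $\sec_h(X)$ to $\sec_{h+1}(X)$ is maximal.

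The final step is to match this ``maximum jump'' condition with non-$(h+1)$-defectivity, namely $c' = \min\{(h+1)(n+1) - 1, N\}$. The universal bounds $c \leq h(n+1)-1$ and $c' \leq c + n + 1$, combined with the assumption $c + n + 1 \leq N$, show that the expected value of $c'$ is attainable only if $c$ already attains $\min\{h(n+1)-1, N - n - 1\}$, and in that case the maximum-jump equality $c' = c + n + 1$ is precisely the statement that $c'$ equals the expected value.

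The main obstacle I anticipate is reconciling the \emph{relative} maximum-jump condition with the \emph{absolute} expected-dimension definition of non-defectivity: a priori $\tau_{X,h}$ could be generically finite while $c < h(n+1)-1$, in which case $c' = c + n + 1$ still falls short of $(h+1)(n+1)-1$. The cleanest way around this is to invoke the standard monotonicity of secant defects (if $X$ is $k$-defective and $\sec_{k+1}(X)\neq \mathbb{P}^N$, then $X$ is $(k+1)$-defective) so that ``maximum jump at level $h$'' propagates backwards and forces all lower secant varieties to attain their expected dimensions as well, thereby closing the equivalence claimed.
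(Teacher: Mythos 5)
The paper offers no proof of this statement---it is quoted directly from [CC01, Proposition 3.5]---so there is no internal argument to compare against; your proposal has to stand on its own. Most of it does: Terracini's lemma, the identity $\dim X_h = \dim\sec_{h+1}(X)-\dim\sec_h(X)-1$ (write $a_k=\dim\sec_k(X)$, $c=a_h$, $c'=a_{h+1}$), and the reduction of generic finiteness to the maximal-jump condition $c'=c+n+1$ are all correct. The direction ``not $(h+1)$-defective $\Rightarrow$ generically finite'' also closes cleanly: from $c'\le c+n+1$, $c\le h(n+1)-1$, and the hypothesis $c\le N-n-1$ one gets $c+n+1\le \min\{(h+1)(n+1)-1,N\}=c'$, forcing equality.

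The gap is in the converse, and it sits exactly where you flag the obstacle: the monotonicity statement you invoke is the wrong one. ``If $X$ is $k$-defective and $\sec_{k+1}(X)\ne\P^N$, then $X$ is $(k+1)$-defective'' propagates defectivity \emph{forward}; its contrapositive transports \emph{non}-defectivity backwards, but non-defectivity at level $h+1$ is precisely what you are trying to prove, so the argument is circular. Concretely: if $c'=c+n+1$ but $c<h(n+1)-1$, your lemma only confirms that $X$ is $(h+1)$-defective---it produces no contradiction. What is actually needed is the \emph{convexity} of the sequence $(a_k)$, i.e.\ that the increments $a_{k+1}-a_k$ are non-increasing in $k$. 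This is where the real geometric input lies: $\tau_{X,k}$ factors as $\tau_{X,k-1}$ followed by a further linear projection of $X_{k-1}$ (from the image of $T_{x_k}X$, which contains the tangent space of $X_{k-1}$ at the image of $x_k$), whence $\dim X_k\le\dim X_{k-1}$, i.e.\ $a_{k+1}-a_k\le a_k-a_{k-1}$. Since every increment is at most $n+1$ and $a_1=n$, the single equality $a_{h+1}-a_h=n+1$ then forces all increments up to level $h+1$ to equal $n+1$, giving $a_{h+1}=(h+1)(n+1)-1$ and hence non-$(h+1)$-defectivity. Your skeleton is salvageable, but the closing lemma must be replaced by this non-increasing-increments fact; as stated, the last step does not go through.
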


In general, however, it is hard to control the dimension of the fibers of tangential projections $\tau_{X,h}$ when $h$ is large. 
In \cite{MRi16} a new strategy was introduced, based on degenerating the linear span of several tangent spaces 
$T_{x_i}X$ into a subspace contained in a
single osculating space $T_x^{k}X$. 
The more points one can use in this degeneration, the better the method works. 
To count the number of points that can be used, the following notion was introduced in \cite[Definition 4.6 and Assumption 4.3]{MRi16}.

\begin{Definition}\label{moscularity}
Let $X\subset\mathbb{P}^N$ be a projective variety.
 
We say that $X$ has \textit{$m$-osculating regularity} if the following property holds. 
Given general points $p_1,\dots,p_{m}\in X$  and  integer $k\geq 0$, 
there exists a smooth curve $C$ and morphisms $\gamma_j:C\to X$, $j=2,\dots,m$, 
such that  $\gamma_j(t_0)=p_1$, $\gamma_j(t_\infty)=p_j$, and the flat limit $T_0$ in $\G(dim(T_t),N)$ of the family of linear spaces 
$$
T_t=\left\langle T^{k}_{p_1},T^{k}_{\gamma_2(t)},\dots,T^{k}_{\gamma_{m}(t)}\right\rangle,\: t\in C\backslash \{t_0\}
$$
is contained in $T^{2k+1}_{p_1}$.

We say that $X$ has \textit{strong $2$-osculating regularity} if the following property holds. 
Given general points $p,q\in X$ and  integers $k_1,k_2\geq 0$,
there exists a smooth curve $\gamma:C\to X$ such that  $\gamma(t_0)=p$, $\gamma(t_\infty)=q$ 
and the flat limit $T_0$ in $\G(dim(T_t),N)$ of the family of linear spaces 
$$
T_t=\left\langle T^{k_1}_p,T^{k_2}_{\gamma(t)}\right\rangle,\: t\in C\backslash \{t_0\}
$$
is contained in $T^{k_1+k_2+1}_p$.
\end{Definition}

The method of  \cite{MRi16} goes as follows. 
If $X\subset\mathbb{P}^N$ has $m$-osculating regularity, one degenerates a general $m$-tangential projection 
into a linear projection with center contained in $T^{3}_p$X. 
Then one further degenerates a general osculating projection $T^{(3,\dots,3)}_{p_1, \dots, p_m}$
into a linear projection with center contained in $T^{7}_qX$.
By proceeding recursively, one degenerates a general $h$-tangential projection 
into a linear projection with center contained in a suitable 
linear span of osculating spaces, and then check whether this projection is generically finite (see the proof of \cite[Theorem 5.3]{MRi16} for details). So one gets the following criterion:
\begin{Theorem}
Let $X\subset \P^N$ be a projective variety having $m$-osculating regularity. Let $k_1,\dots,k_l\geq 1$ be integers such that the general osculating projection 
$\Pi_{T^{k_1,\dots,k_l}_{p_1,\dots,p_l}}$ is generically finite.
Then $X$ is not $h$-defective for $h\leq \displaystyle \left(\sum_{j=1}^{l}m^{\lfloor \log_2(k_j+1)\rfloor -1}\right)+1$.
\end{Theorem}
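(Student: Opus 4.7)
The plan is to reduce to a question about tangential projections via Proposition~\ref{cc}, and then use $m$-osculating regularity to degenerate a general tangential projection into the osculating projection $\Pi_{T^{k_1,\dots,k_l}_{p_1,\dots,p_l}}$. Setting $\lambda_j := \lfloor \log_2(k_j+1)\rfloor - 1$ and $h-1 = \sum_{j=1}^l m^{\lambda_j}$, I fix $h-1$ general points of $X$, organized into $l$ groups $G_j$ of size $m^{\lambda_j}$. By Proposition~\ref{cc} it suffices to show that the projection with center $\left\langle T_{x_1}X, \dots, T_{x_{h-1}}X\right\rangle$ is generically finite on $X$.

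The central step is an iterated application of $m$-osculating regularity (Definition~\ref{moscularity}). A single application collapses the linear span of $m$ general tangent spaces into a subspace of $T^{3}_{p}X$. Applying the same property with $k=3$, the span of $m$ general copies of $T^{3}$ degenerates to a subspace of $T^{7}_{q}X$, and after $\lambda_j$ iterations the span of $m^{\lambda_j}$ general tangent spaces degenerates to a subspace of $T^{2^{\lambda_j+1}-1}_{p_j}X \subseteq T^{k_j}_{p_j}X$, where the inclusion follows from $\lambda_j+1\leq \log_2(k_j+1)$. Performing the degeneration simultaneously on each group $G_j$, the general $(h-1)$-tangential projection degenerates to a linear projection with center contained in $\left\langle T^{k_1}_{p_1}X, \dots, T^{k_l}_{p_l}X\right\rangle$. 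Since $\Pi_{T^{k_1,\dots,k_l}_{p_1,\dots,p_l}}$ is generically finite by hypothesis, so is the projection from this smaller center, and by semicontinuity of fiber dimension the general tangential projection is generically finite as well. Proposition~\ref{cc} then yields that $X$ is not $h$-defective.

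The main obstacle is making the iterative degeneration rigorous. The definition of $m$-osculating regularity is phrased as a property of a single one-parameter family collapsing $m$ osculating spaces of order $k$ into one of order $2k+1$. Iterating the construction demands a nested multi-parameter deformation and requires verifying that each intermediate flat limit lies in the predicted osculating space and that the points involved remain general at every stage, so that the successive limits assemble into a single flat degeneration of the tangential center into a subspace of $\left\langle T^{k_1}_{p_1}X, \dots, T^{k_l}_{p_l}X\right\rangle$. This careful tracking of flat limits across several scales of parameter is exactly what is carried out in the proof of \cite[Theorem~5.3]{MRi16}, and the present statement is essentially that theorem reformulated to separate the combinatorial counting from the generic-finiteness hypothesis on the osculating projection.
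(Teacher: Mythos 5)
Your proposal is correct and follows essentially the same route as the paper: reduction to tangential projections via Proposition~\ref{cc}, iterated use of $m$-osculating regularity to collapse $m^{\lfloor\log_2(k_j+1)\rfloor-1}$ general tangent spaces into a subspace of $T^{2^{\lambda_j+1}-1}_{p_j}X\subseteq T^{k_j}_{p_j}X$, and semicontinuity of fiber dimension, with the rigorous nested flat-limit construction deferred to \cite[Theorem 5.3]{MRi16} exactly as the paper itself does. Your exponent bookkeeping matches the stated bound, so there is nothing to add.
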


If $X$ in addition has strong $2$-osculating regularity, then this can be done even more effectively. 
To state the criterion of \cite{MRi16}, we introduce a function $h_m:\N_{\geq 0}\to \N_{\geq 0}$ counting how many tangent spaces 
can be degenerated into a higher order osculating space in this way. 

\begin{Definition}\label{defhowmanytangent}
Let $m\geq 2$ be an integer. Define the function
\begin{align*}
h_m:\N_{\geq 0}\to \N_{\geq 0}
\end{align*}
as follows: $h_m(0)=0$. For any $k\geq 1$, write
$$
k+1=2^{\lambda_1}+2^{\lambda_2}+\dots+2^{\lambda_l}+\varepsilon,
$$
where $\lambda_1>\lambda_2>\dots>\lambda_l \geq 1$, $\varepsilon\in \{0,1\}$. Then set
$$
h_m(k)=m^{\lambda_1-1}+m^{\lambda_2-1}+\dots+m^{\lambda_l-1}.
$$
\end{Definition}

\begin{Theorem}{\cite[Theorem 5.3]{MRi16}}\label{lemmadefectsviaosculating}
Let $X\subset \P^N$ be a projective variety 
having $m$-osculating regularity and strong $2$-osculating regularity.
Let $k_1,\dots,k_l\geq 1$ be integers such that the general osculating projection 
$\Pi_{T^{k_1,\dots,k_l}_{p_1,\dots,p_l}}$ is generically finite.
Then $X$ is not $h$-defective for $h\leq \displaystyle\sum_{j=1}^{l}h_m(k_j)+1$.
\end{Theorem}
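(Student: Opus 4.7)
By Proposition~\ref{cc}, it suffices to show that the general $h$-tangential projection $\tau_{X,h}$ is generically finite for $h=\sum_{j=1}^{l}h_m(k_j)$. Since the osculating projection $\Pi_{T^{k_1,\dots,k_l}_{p_1,\dots,p_l}}$ is generically finite by hypothesis, by upper semi-continuity of fiber dimension it is enough to exhibit a flat family of linear subspaces $\{T_t\}_{t\in C}$ parametrized by a smooth pointed curve $(C,t_0)$ whose general member is the span $\left\langle T_{x_1(t)}X,\dots,T_{x_h(t)}X\right\rangle$ of $h$ general tangent spaces, and whose special member $T_{t_0}$ is contained in $\left\langle T^{k_1}_{p_1}X,\dots,T^{k_l}_{p_l}X\right\rangle$ for suitable general points $p_j\in X$.

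The core technical step is the following claim: for each $j$ one can construct a one-parameter degeneration under which $h_m(k_j)$ general tangent spaces to $X$ collapse into a subspace of a single osculating space $T^{k_j}_{p_j}X$. Granting this, the required degeneration for $\tau_{X,h}$ is obtained by carrying out the $l$ individual degenerations on disjoint packets of tangent spaces over a common base curve, choosing the limit points $p_1,\dots,p_l$ to be mutually general, and taking spans.

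To prove the claim, write $k_j+1=2^{\lambda_1}+\cdots+2^{\lambda_s}+\epsilon$ as in Definition~\ref{defhowmanytangent}, so that $h_m(k_j)=\sum_{i=1}^{s}m^{\lambda_i-1}$. First, I would prove by induction on $\lambda\geq 1$ that $m^{\lambda-1}$ general tangent spaces can be degenerated into a subspace of $T^{2^{\lambda}-1}_qX$ at a single general point $q$. The base case $\lambda=1$ is trivial. For the inductive step, group the $m^{\lambda-1}$ tangent spaces into $m$ packets of size $m^{\lambda-2}$; by the inductive hypothesis each packet degenerates into an osculating space $T^{2^{\lambda-1}-1}_{q_i}X$. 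Applying $m$-osculating regularity with $k=2^{\lambda-1}-1$ to the $m$ general points $q_1,\dots,q_m$ then degenerates the span $\left\langle T^{2^{\lambda-1}-1}_{q_1}X,\dots,T^{2^{\lambda-1}-1}_{q_m}X\right\rangle$ into $T^{2(2^{\lambda-1}-1)+1}_qX=T^{2^\lambda-1}_qX$, and the two one-parameter degenerations are concatenated over a single curve. To combine the different terms of the binary expansion, I would apply strong $2$-osculating regularity iteratively: the span of $T^{2^{\lambda_i}-1}$ and $T^{2^{\lambda_{i+1}}-1}$ at two general points degenerates into a subspace of $T^{2^{\lambda_i}+2^{\lambda_{i+1}}-1}$ at a single point, and the inequality $2^{\lambda_1}+\cdots+2^{\lambda_s}-1\leq k_j$ ensures that the final output lies in $T^{k_j}_{p_j}X$.

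The main obstacle is the bookkeeping needed to concatenate these successive degenerations into a single flat family of linear subspaces of $\mathbb{P}^N$: one must verify that every intermediate limit is genuinely realized as a flat limit in the relevant Grassmannian, and that at each stage the moving points where the regularity hypotheses are invoked can be kept general enough for the next stage to apply. This is handled by splicing together the curves provided by $m$-osculating and strong $2$-osculating regularity, using properness of the Grassmannian to extract flat limits of the spans, and inducting on the number of layers of the degeneration scheme.
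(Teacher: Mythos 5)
Your sketch is correct and follows essentially the same strategy as the paper (which quotes this statement from \cite[Theorem 5.3]{MRi16} and outlines exactly this method before Definition~\ref{defhowmanytangent}): degenerate $m^{\lambda-1}$ tangent spaces into $T^{2^\lambda-1}_q$ by iterating $m$-osculating regularity, glue the terms of the binary expansion of $k_j+1$ via strong $2$-osculating regularity, and conclude by semicontinuity together with Proposition~\ref{cc}. The only cosmetic difference is that one need not splice everything into a single master family; it suffices to apply upper semicontinuity of fiber dimension once per layer of the degeneration, working outward from the generically finite osculating projection.
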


We now prove our main result on non-defectivity of Segre-Veronese Varieties.
We follow the notation introduced in the previous sections. 

\begin{Theorem}\label{main}
Let the notation be as above. 
The Segre-Veronese variety $SV^{\pmb n}_{\pmb d}$ is not $h$-defective for
$$h\leq n_1h_{n_1+1}(d-2)+1.$$
\end{Theorem}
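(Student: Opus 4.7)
The plan is to invoke Theorem~\ref{lemmadefectsviaosculating} with $m=n_1+1$, taking $l=n_1$ osculating spaces of order $k_1=\dots=k_{n_1}=d-2$. Once we verify the three hypotheses of that theorem for $SV^{\pmb n}_{\pmb d}$, the bound $h\leq \sum_{j=1}^{n_1}h_{n_1+1}(d-2)+1=n_1 h_{n_1+1}(d-2)+1$ drops out immediately.

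The generic finiteness hypothesis is already in hand: by Proposition~\ref{projoscseveralpoints}, the osculating projection $\Pi_{T^{d-2,\dots,d-2}_{e_{I_0},\dots,e_{I_{n_1-1}}}}$ is in fact birational from a choice of $n_1$ coordinate points. Since this is a projection from an osculating configuration at $n_1$ general points of $SV^{\pmb n}_{\pmb d}$ (the transitivity of $\Aut(\mathbb{P}^{\pmb n})$ on ordered $n_1$-tuples of points in general position reduces the general case to the coordinate one), the general $(d-2,\dots,d-2)$-osculating projection at $n_1$ points is generically finite.

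It remains to establish the two regularity conditions from Definition~\ref{moscularity}. For strong $2$-osculating regularity, given two general points $p,q\in SV^{\pmb n}_{\pmb d}$ we reduce to the case $p=e_{I_0}$, $q=e_{I_1}$ and choose the curve $\gamma$ to be the image under $\sigma\nu_{\pmb d}^{\pmb n}$ of a product of lines joining the corresponding coordinate points factor by factor. On each Veronese factor $V_{d_j}^{n_j}$ one degenerates $T^{k_2}_{\gamma_j(t)}$ toward $T^{k_1+k_2+1}_{p_j}$ by a standard $1$-parameter family; the key input is the explicit description of osculating spaces of a Veronese variety at coordinate points from Proposition~\ref{oscsegver}, which says that $T^s_{e_{\underline i}}V_d^n$ is spanned by the monomials at distance $\leq s$ from $e_{\underline i}$. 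Expanding the parametrization in powers of $t$ and reading off the leading terms, the limit of the span is visibly contained in the span of monomials at distance $\leq k_1+k_2+1$ from $p_j$, i.e.\ inside $T^{k_1+k_2+1}_{p_j}$. Taking the Segre composition preserves this inclusion, giving strong $2$-osculating regularity.

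For $(n_1+1)$-osculating regularity we must simultaneously degenerate $T^k_{\gamma_j(t)}$ at $n_1+1$ points $p_1,\gamma_2(t),\dots,\gamma_{n_1+1}(t)$ into $T^{2k+1}_{p_1}$. The strategy is to pick the $n_1+1$ points to lie on a rational normal curve of degree $n_1$ contained in the first factor $\mathbb{P}^{n_1}$ (which is possible because $n_1+1$ general points determine such a curve), and constant in the remaining factors, and then to use a $1$-parameter family in $\Aut(\mathbb{P}^{n_1})$ that collapses this configuration to $p_1$. The crucial computation — and this is the main obstacle — is to show that the flat limit of $\langle T^k_{p_1},T^k_{\gamma_2(t)},\dots,T^k_{\gamma_{n_1+1}(t)}\rangle$ stays inside $T^{2k+1}_{p_1}$; this is again a coordinate computation using Proposition~\ref{oscsegver}, where one verifies that on each factor the distance between the indices of the supporting monomials and $I_0$ does not exceed $2k+1$. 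The bound $2k+1$ (rather than something worse) comes from the fact that $n_1+1$ collinear jets of order $k$ can be interpolated by a single jet of order $2k+1$, which is precisely the mechanism underlying $(n_1+1)$-osculating regularity on $\mathbb{P}^{n_1}$.

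Once both regularity properties are verified, Theorem~\ref{lemmadefectsviaosculating} applied with $m=n_1+1$, $l=n_1$, $k_1=\dots=k_{n_1}=d-2$ yields non-defectivity for $h\leq n_1 h_{n_1+1}(d-2)+1$, completing the proof.
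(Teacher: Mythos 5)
Your top-level reduction is exactly the paper's: apply Theorem~\ref{lemmadefectsviaosculating} with $m=n_1+1$, $l=n_1$ and $k_1=\dots=k_{n_1}=d-2$, feed in the birationality of $\Pi_{T^{d-2,\dots,d-2}_{e_{I_0},\dots,e_{I_{n_1-1}}}}$ from Proposition~\ref{projoscseveralpoints}, and compute $\sum_{j=1}^{n_1}h_{n_1+1}(d-2)+1=n_1h_{n_1+1}(d-2)+1$. The paper's proof of Theorem~\ref{main} is literally this deduction, with the two regularity hypotheses outsourced to Propositions~\ref{limitosculatingspacessegver} and~\ref{limitosculatingspacessegverII}; so the part of your argument corresponding to the paper's own proof is correct.

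The gaps are in your sketches of those two regularity properties, which is where all the real work lives. For strong $2$-osculating regularity you claim that, after expanding the parametrization in powers of $t$, the limit of $\left\langle T^{k_1}_p, T^{k_2}_{\gamma(t)}\right\rangle$ is ``visibly'' contained in $T^{k_1+k_2+1}_p$. It is not: the flat limit of a family of linear spans can be strictly larger than the span of the limits of any chosen generators, so reading off leading terms of generators proves nothing about $T_0$. The paper must produce, for each coordinate $z_I$ with $d(I,I_0)>k_1+k_2+1$, a family of hyperplanes $(F_I=0)\supseteq T_t$ degenerating to one that still separates $e_I$; the existence of such hyperplanes with $c_I\neq 0$ reduces to a linear system whose solvability rests on the nonvanishing of a binomial determinant (Gessel--Viennot). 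None of this appears in your sketch. For $(n_1+1)$-osculating regularity your proposed degeneration is different from the paper's and its justification fails: you place the $n_1+1$ points on a single rational normal curve and invoke the heuristic that $n_1+1$ jets of order $k$ along a curve are interpolated by a single jet of order $2k+1$. That count only balances for two points (it requires $m(k+1)\leq 2k+2$, i.e.\ $m\leq 2$), so it cannot be the mechanism for $m=n_1+1>2$. The paper instead joins $p_0$ to each $p_j=e_{I_j}$ by its own rational normal curve moving in the independent direction $e_j$, and the crucial structural fact is a decoupling statement: an index $I$ with $d(I,I_0)>2k+1$ lies in $\Delta(J)^+_j$ for at most one $j$, so the hyperplane conditions coming from the different curves can be solved one direction at a time. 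With all points constrained to a single curve this decoupling is unavailable, and you would need a genuinely new argument to close the claim.
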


\begin{proof}
We will show in Propositions~\ref{limitosculatingspacessegver} and \ref{limitosculatingspacessegverII} that 
the Segre-Veronese variety $SV_{\pmb d}^{\pmb n}$ has 
strong $2$-osculating regularity, and $(n_1+1)$-osculating regularity.
The result then follows immediately from Proposition \ref{projoscseveralpoints} and Theorem~\ref{lemmadefectsviaosculating}.
\end{proof}

\begin{Remark}
Write 
$$
d-1 = 2^{\lambda_1}+2^{\lambda_2}+\dots + 2^{\lambda_s}+\epsilon
$$
with $\lambda_1 >\lambda_2>\dots >\lambda_s\geq 1$, $\epsilon\in\{0,1\}$, so that $\lambda_1 = \lfloor \log_2(d-1)\rfloor$.
By Theorem \ref{main} $SV^{\pmb n}_{\pmb d}$ is not $h$-defective for
$$
h\leq n_1((n_1+1)^{\lambda_1-1}+\dots + (n_1+1)^{\lambda_s-1})+1.
$$
So we have that asymptotically  $SV^{\pmb n}_{\pmb d}$ is not $h$-defective for
$$
h\leq n_1^{\lfloor \log_2(d-1)\rfloor}.
$$
\end{Remark}


Recall  \cite[Proposition 3.2]{CGG03}:
except for the Segre product $\mathbb{P}^{1}\times\mathbb{P}^{1}\subset\mathbb{P}^3$, 
the Segre-Veronese variety $SV^{\pmb n}_{\pmb d}$ is not $h$-defective for $h\leq \min\{n_i\}+1$,
independently of $\pmb d$.
In the following table, for a few values of $d$, we compute the highest value of $h$ for which Theorem~\ref{main} 
gives non $h$-defectivity of $SV^{\pmb n}_{\pmb d}$.

\medskip

\begin{center}
\begin{tabular}{|c|l|}
\hline 
$d = d_1+\dots +d_r$ & $h$\\ 
\hline 
$3$ & $n_1+1$\\ 
\hline 
$5$ & $n_1(n_1+1)+1$\\ 
\hline 
$7$ & $n_1((n_1+1)+1)+1$\\ 
\hline 
$9$ & $n_1(n_1+1)^2+1$\\ 
\hline 
$11$ & $n_1((n_1+1)^2+1)+1$\\ 
\hline 
$13$ & $n_1((n_1+1)^2+n_1+1)+1$\\ 
\hline
$15$ & $n_1((n_1+1)^2+(n_1+1)+1)+1$\\
\hline 
$17$ & $n_1(n_1+1)^3+1$\\
\hline
\end{tabular} 
\end{center}

\begin{Remark}
Note that the bound of Theorem \ref{main} is sharp in some cases. For instance, it is well known that $SV^{(1,1)}_{(2,2)}$, $SV^{(1,1,1)}_{(1,1,2)}$, $SV^{(1,1,1,1)}_{(1,1,1,1)}$ are $3$-defective, and $SV^{(2,2,2)}_{(1,1,1)}$ is $4$-defective. On the other hand $SV^{(1,1)}_{(2,2)}$, $SV^{(1,1,1)}_{(1,1,2)}$, $SV^{(1,1,1,1)}_{(1,1,1,1)}$ are not $2$-defective, and $SV^{(2,2,2)}_{(1,1,1)}$ is not $3$-defective.
\end{Remark}


\medskip

\begin{Remark}
By Proposition~\ref{limitosculatingspacessegverII},
the Segre-Veronese variety $SV_{\pmb d}^{\pmb n}$,  $\pmb{n}=(n_1\leq \dots \leq n_r)$, has $(n_1+1)$-osculating regularity. 
We do not know in general what is the highest osculating regularity of $SV_{\pmb d}^{\pmb n}$. 
Better osculating regularity results would yield better bounds for $h$ in Theorem~\ref{main}. 

Determining the highest osculating regularity of a variety can be a difficult problem. 
There are examples of ruled surfaces that do not  have $2$-osculating regularity (see  \cite[Example 4.4]{MRi16}). In the smooth case, we give below an example of a surface having $2$-osculating regularity but not $3$-osculating regularity.
\end{Remark}

\begin{Example}
Consider the rational normal scroll $X_{(1,7)}\subset\mathbb{P}^9$, which is parametrized by 
$$
\begin{array}{cccc}
\phi: & \mathbb{A}^2 & \longrightarrow & \mathbb{A}^9\\ 
 & (u,\alpha) & \mapsto & (\alpha u^7,\alpha u^6,\dots, \alpha u, \alpha, u)
\end{array} 
$$ 
Note that $\frac{\partial^2\phi}{\partial\alpha^2} = \frac{\partial^3\phi}{\partial\alpha^3} = \frac{\partial^3\phi}{\partial\alpha^2u} = 0$, while there are no other relations between the partial derivatives, up to order three, of $\phi$ at the general point of $X_{(1,7)}$. Therefore
$$\dim (T^3X_{(1,7)}) = 10-3-1 = 6$$
On the other hand by \cite[Lemma 4.10]{DP96} we have that $\dim(\sec_3(X_{(1,7)})) = 7$. Hence by Terracini's lemma \cite[Theorem 1.3.1]{Ru03} the span of three general tangent spaces of $X_{(1,7)}$ has dimension seven.
\end{Example}



%
%

\section{Osculating regularity of Segre-Veronese varieties}\label{degtanosc}

In this section we show that the Segre-Veronese variety $SV_{\pmb d}^{\pmb n}\subseteq\mathbb{P}^{N(\pmb{n},\pmb{d})}$ has 
strong $2$-osculating regularity, and $(n_1+1)$-osculating regularity.
We follow the notation introduced in the previous sections.

\begin{Proposition}\label{limitosculatingspacessegver}
The Segre-Veronese variety $SV_{\pmb{d}}^{\pmb{n}}\subseteq\mathbb{P}^{N(\pmb{n},\pmb{d})}$ has 
strong $2$-osculating regularity.
\end{Proposition}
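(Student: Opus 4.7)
The plan is to prove strong $2$-osculating regularity via an explicit computation based on Proposition~\ref{oscsegver}. First, using the transitive action of $\prod_{j=1}^{r} PGL(V_j)$ on pairs of general points of $\mathbb{P}^{\pmb n}$ (available since each $n_j \geq 1$), I reduce to the case where $p = e_{I_0}$ and $q = e_{I_1}$ are the coordinate points corresponding to $I_0 = \big((0,\dots,0),\dots,(0,\dots,0)\big)$ and $I_1 = \big((1,\dots,1),\dots,(1,\dots,1)\big)$. I then choose the curve $\gamma \colon \mathbb{P}^1 \to \mathbb{P}^{\pmb n}$ given by $\gamma(t) = ([e_0 + t e_1], \dots, [e_0 + t e_1])$, which sends $0 \mapsto p$ and $\infty \mapsto q$ under $\sigma\nu_{\pmb d}^{\pmb n}$.

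Next, using the $t$-dependent basis $e'_0 = e_0 + t e_1$, $e'_i = e_i$ for $i \geq 1$ in each $V_j$, Proposition~\ref{oscsegver} identifies $T^{k_2}_{\gamma(t)}$ with the span of the shifted basis vectors $\tilde{e}_J$ for $J \in \Lambda$ with $d(J, I_0) \leq k_2$. Expanding each $\tilde{e}_J$ in the original basis by the multinomial theorem yields $\tilde{e}_J = \sum_{s\geq 0} t^s \, w_s(J)$, where $w_s(J)$ is a linear combination of standard basis vectors $e_K$ with $d(K, I_0) = d(J, I_0) + s$. The key combinatorial observation is that each power of $t$ in this expansion precisely increases the distance to $I_0$ by one, because in each factor the product $(e_0 + t e_1)^m$ contributes at most one new ``$1$-coordinate'' per order of $t$.

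To show the flat limit of $T_t := \langle T^{k_1}_p, T^{k_2}_{\gamma(t)}\rangle$ is contained in $T^{k_1+k_2+1}_p$, I construct an algebraic basis of $T_t$ over $\mathbb{C}(t)$ whose $t \to 0$ limits all lie in $T^{k_1+k_2+1}_p$. I row-reduce the spanning set $\{e_I : d(I, I_0) \leq k_1\} \cup \{\tilde{e}_J : d(J, I_0) \leq k_2\}$: whenever a generator's leading term lies in $T^{k_1}_p \subseteq T_t$, I subtract it and divide by $t$, which raises the effective leading distance by exactly $1$. Starting from distance at most $k_2$, this subtract-and-divide can be iterated at most $k_1 + 1$ times (after which the leading term falls outside $T^{k_1}_p$ and no further subtraction is possible), so each basis element produced has leading term of distance at most $k_1 + k_2 + 1$ from $I_0$, as required.

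The main obstacle will be the bookkeeping across different generators: two $\tilde{e}_J$'s with matching leading behavior produce, after cancellation, a residual vector whose leading term lies at a strictly higher distance and may itself require further rounds of reduction. One must verify that every such cross-cancellation still respects the bound $k_1 + k_2 + 1$, and a dimension count (matching the generic $\dim T_t$ to the number of independent basis vectors produced) is needed to conclude that the flat limit $T_0$ is exactly spanned by the constructed vectors and hence contained in $T^{k_1+k_2+1}_p$.
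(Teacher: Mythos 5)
Your setup coincides with the paper's: the same reduction to the coordinate points $e_{I_0}$, $e_{I_1}$, the same rational normal curve $\gamma$, and the same expansion of the shifted basis vectors $e_I^t=\sum_{J\in\Delta(I)^+}t^{d(I,J)}c_{(I,J)}e_J$, where the coefficients are products of binomial coefficients and each power of $t$ raises the distance to $I_0$ by one. The divergence is in how the flat limit is controlled: you propose a ``primal'' argument (row-reduce the generators of $T_t$ over $\C(t)$ and show every initial term sits at distance $\leq k_1+k_2+1$), whereas the paper argues dually, exhibiting for each index $I$ with $d(I,I_0)>k_1+k_2+1$ a family of hyperplanes of the special form $\sum_{J\in\Delta(I)^-}t^{d(I,J)}c_Jz_J=0$ containing $T_t$ for $t\neq 0$ and degenerating to $z_I=0$ plus higher-order terms.

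The gap is that the step you defer to the end --- verifying that cross-cancellations among the $\tilde e_J$'s respect the bound $k_1+k_2+1$ --- is not bookkeeping; it is the entire mathematical content of the proposition, and your counting argument does not establish it. Your bound ``at most $k_1+1$ subtract-and-divide steps'' only accounts for reduction against the static generators $e_I$ with $d(I,I_0)\leq k_1$. After that, several reduced generators share leading terms at distance $k_1+1$, and each cross-cancellation followed by division by $t$ raises the leading distance by \emph{at least} one --- by exactly one only if the relevant minors of the matrix of binomial coefficients $\binom{s_I^++d(I,K)}{l}$ are nonzero. If a minor degenerated, a single cancellation would kill two or more orders of $t$ at once and push a leading term beyond distance $k_1+k_2+1$, and the statement would fail; indeed, the analogous statement does fail for some ruled surfaces (\cite[Example 4.4]{MRi16}), so non-degeneracy is not automatic. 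The paper's proof is devoted precisely to this point: the existence of the required hyperplanes reduces to a linear system whose coefficient matrix is a matrix of binomial coefficients, and its maximal rank is established via the Gessel--Viennot evaluation of binomial determinants \cite[Corollary 2]{GV85}. Your proposal would need an equivalent non-vanishing statement for the corresponding minors to close the argument.
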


\begin{proof}
Let $p,q\in SV_{\pmb{d}}^{\pmb{n}}\subseteq\mathbb{P}^{N(\pmb{n},\pmb{d})}$ be general points.
There is a projective automorphism of $SV_{\pmb{d}}^{\pmb{n}}\subseteq\mathbb{P}^{N(\pmb{n},\pmb{d})}$ mapping 
$p$ and $q$ to the coordinate points $e_{I_0}$ and $e_{I_1}$. 
These points are connected by the degree $d$ rational normal curve defined by 
$$
\gamma([t:s])=(se_{I_0}+te_{I_1})^{d_1}\otimes\dots\otimes (se_{I_0}+te_{I_1})^{d_r}.
$$
We work in the affine chart $(s=1)$, and set $t=(t:1)$. 
Given integers $k_1,k_2\geq 0$, consider the family of linear spaces 
$$
T_t=\left\langle T^{k_1}_{e_{I_0}},T^{k_2}_{\gamma(t)}\right\rangle,\: t\in \C\backslash \{0\}.
$$
We will show that the flat limit $T_0$  of $\{T_t\}_{t\in \C\backslash \{0\}}$ in
$\G(\dim(T_t),N(\pmb{n},\pmb{d}))$ is contained in $T^{k_1+k_2+1}_{e_{I_0}}$.

We start by writing the linear spaces $T_t$ explicitly. 
For $j=1,\dots, r$, we define the vectors 
$$
e_0^{t}=e_0+te_1,e_1^t=e_1,e_2^t=e_2,\dots,e_{n_j}^{t}=e_{n_j}\in V_{j}.
$$
Given $I^j=(i_1,\dots,i_{d_j})\in \Lambda_{n_j,d_j}$, we denote by 
$e_{I^j}^{t}\in \Sym^{d_j}V_j$  the symmetric product $e_{i_1}^{t}\cdot\ldots\cdot e_{i_{d_j}}^{t}$.
Given $I=(I^1,\dots,I^r)\in \Lambda=\Lambda_{\pmb{n},\pmb{d}}$, we denote by 
$e_I^{t}\in \mathbb{P}^{N(\pmb{n},\pmb{d})}$
the point corresponding to 
$$
e_{I^1}^{t}\otimes\dots\otimes e_{I^r}^{t}\in \Sym^{d_1}V_1\otimes\dots\otimes \Sym^{d_r}V_r.
$$
By Proposition \ref{oscsegver} we have 
$$
T_t=\left\langle e_I\: | \: d(I,I_0)\leq k_1; \: e_I^{t}\: |\: d(I,I_0)\leq k_2\right\rangle,\: t\neq 0.
$$
We shall write $T_t$ in terms of the basis $\{e_J|J\in\Lambda\}$. 
Before we do so, it is convenient to introduce some additional notation. 

\begin{Notation}\label{defdelta}
Let $I\in \Lambda_{n,d}$, and write:
\begin{equation}\label{defI}
I=(\underbrace{0,\dots,0}_{a\text{ times}},\underbrace{1,\dots,1}_{b \text{ times}},i_{a+b+1},\dots,i_d),
\end{equation}
with $a,b\geq 0$ and $1<i_{a+b+1}\leq\dots\leq i_d$.
Given $l\in \Z$, define $\delta^l(I)\in \Lambda_{n,d}$ as 
$$
\delta^l(I)=
(\underbrace{0,\dots,0}_{a-l\text{ times}},\underbrace{1,\dots,1}_{b+l \text{ times}},i_{a+b+1},\dots,i_d),
$$
provided that $-b\leq l\leq a$. 

Given $I=(I^1,\dots,I^r)\in \Lambda$ and $\pmb{l}=(l_1,\dots,l_r)\in \Z^r$, define
$$
\delta^{\pmb{l}}(I)=(\delta^{l_1}(I^1),\dots,\delta^{l_r}(I^r))\in \Lambda,
$$
provided that each $\delta^{l_j}(I_j)$ is defined. 
Let $l\in \Z$. If $l\geq 0$, set 
$$
\Delta(I,l)= \left\{ \delta^{\pmb{l}}(I) | \pmb{l}=(l_1,\dots,l_r), l_1,\dots,l_r\geq 0, \: l_1+\dots +l_r=l\:\right\}\subset \Lambda.
$$
If $l<0$, set 
$$
\Delta(I,l)=\left\{ J|\: I\in\Delta(J,-l) \right\} \subset \Lambda.
$$
Define also:
$$
s_I^+=\max_{l\geq 0}\{\Delta(I,l)\neq \emptyset\}\in\{0,\dots,d\}= d-d(I,I_0) ,
$$
$$
s_I^-=\max_{l\geq 0}\{\Delta(I,-l)\neq \emptyset\}\in\{0,\dots,d\}= d-d(I,I_1),
$$
$$
\Delta(I)^+=\bigcup_{0\leq l} \Delta(I,l)=\!\!\!\!\bigcup_{0\leq l \leq s_I^+} \!\!\!\! \Delta(I,l), \text{ and }
$$
$$
\Delta(I)^-=\bigcup_{0\leq l} \Delta(I,-l)=\!\!\!\!\bigcup_{0\leq l \leq s_I^-} \!\!\!\!\Delta(I,-l).
$$
Note that if $J\in\Delta(I,l)$, then $d(J,I)=|l|$, $d(J,I_0)=d(I,I_0)+l$, and $d(J,I_1)=d(I,I_1)-l$.
Note also that, if $J\in \Delta(I)^-\cap \Delta(K)^+$, then $d(I,K)=d(I,J)+d(J,K)$.
\end{Notation}

Now we write each vector $e_I^t$ with $d(I,I_0)< k_2$ in terms of the basis $\{e_J|J\in\Lambda\}$. 

First, we consider the Veronese case. Let $I=(i_1,\dots,i_d)\in \Lambda_{n,d}$ be as in \eqref{defI}, so that $s_I^+=a$.
We have:
\begin{align*}
e_I^t&=(e_0^t)^a (e_1^t)^b e_{i_{a+b+1}}^t\cdots e_{i_d}^t
=(e_0+te_1)^a e_1^b e_{i_{a+b+1}}\cdots e_{i_d}=\\
&=e_0^a e_1^b e_{i_{a+b+1}}\cdots e_{i_d}
+t\binom{a}{1}e_0^{a-1}e_1^{b+1} e_{i_{a+b+1}}\cdots e_{i_d}+\dots
+t^a  e_1^{b+a} e_{i_{a+b+1}}\cdots e_{i_d}=\\
&=\sum_{l=0}^a t^l\binom{a}{l}e_0^{a-l}e_1^{b+l} e_{i_{a+b+1}}\cdots e_{i_d}
=\sum_{l=0}^a t^l\binom{a}{l}e_{\delta^l (I)}.
\end{align*}

In the Segre-Veronese case, for any $I=(I^1,\dots,I^r)\in\Lambda$, we have
\begin{align}\label{etonthebasis}
e_{I}^{t}
&=\!\!\!\!\sum_{J=(J^1,\dots,J^r)\in \Delta(I)^+}\!\!\!\!
t^{d(I,J)} c_{(I,J)}e_J,
\end{align}
where $ c_{(I,J)}=\binom{s_{I^1}^+}{d(I^1,J^1)}\cdots \binom{s_{I^r}^+}{d(I^r,J^r)}$. 
So we can rewrite the linear subspace $T_t$ as 
\begin{align}\label{Ttonthebasis}
T_t=&\Big< e_I \: | \: d(I,I_0)\leq k_1;\:
\!\!\!\!\sum_{J\in \Delta(I)^+}\!\!\!\!
t^{d(I,J)} c_{(I,J)}e_J\: | \: d(I,I_0)\leq k_2
\Big >.
\end{align}
For future use, we define the set indexing coordinates $z_I$ that do not vanish on some generator of $T_t$:
$$ 
\Delta=\left\{ I \: | \:  d(I,I_0)\leq k_1\right\}\bigcup \left(\bigcup_{d(I,I_0)\leq k_2} \!\!\!\!\Delta(I)^+\right)\subset \Lambda.
$$

On the other hand, by Proposition \ref{oscsegver}, we have 
$$
T^{k_1+k_2+1}_{e_{I_0}}=\left\langle e_I\: | \: d(I,I_0)\leq k_1+k_2+1\right\rangle
=\{z_I=0\: | \: d(I,I_0)> k_1+k_2+1 \}.
$$
In order to prove that $T_0\subset T^{k_1+k_2+1}_{e_{I_0}}$, we will define a family of linear subspaces $L_t$ 
whose flat limit at $t=0$ is $T^{k_1+k_2+1}_{e_{I_0}}$, and such that $T_t\subset L_t$ for every $t\neq 0$. 
(Note that we may assume that  $k_1+k_2\leq d-2$, for otherwise $T^{k_1+k_2+1}_{e_{I_0}}=\mathbb{P}^{N(\pmb{n},\pmb{d})}$.) 
For that, it is enough to 
exhibit, for each pair $(I,J)\in \Lambda^2$ with $d(I,I_0)> k_1+k_2+1$, 
a polynomial $f(t)_{(I,J)}\in \C[t]$ so that the hyperplane $(H_I)_t\subset\mathbb{P}^{N(\pmb{n},\pmb{d})}$ defined by
$$
z_I+t\left( \sum_{J\in \Lambda, \ J\neq I}f(t)_{(I,J)} z_J\right)=0
$$
satisfies $T_t\subset (H_I)_t$ for every $t\neq 0$.
If $I\notin \Delta$, then we can take $f(t)_{(I,J)}\equiv 0$ $\forall J\in \Lambda$. 
So from now on we assume that  $I\in \Delta$. 
We claim that it is enough to  find a hyperplane of type
\begin{align}\label{hyperplane}
F_I=\sum_{J\in \Delta(I)^-}t^{d(I,J)}c_J z_J =0,
\end{align}
with $c_J\in\C$ for $ J\in \Delta(I)^-$, $c_I\neq 0$, and such that $T_t\subset (F_I=0)$ for  $t\neq 0$. 
Indeed, once we find such $F_I$'s, we can take $(H_I)_t$ to be 
$$
z_I+\frac{t}{c_I}\left(\sum_{J\in \Delta(I)^-, \ J\neq I}t^{d(J,I)-1} c_J z_J\right)=0.
$$

In  \eqref{hyperplane}, there are $|\Delta(I)^-|$ indeterminates $c_J$. 
Let us analyze what conditions we get by requiring that $T_t\subseteq (F_I=0)$ for  $t\neq 0$. 
For any $e_K^t$ with non-zero coordinate $z_I$, we have $I\in \Delta(K)^+$, and so $K\in \Delta(I)^-$.
Given $K\in \Delta(I)^-$ we have
\begin{align*}
F_I(e_K^t)&\stackrel{(\ref{etonthebasis})}{=}\
F_I\left(\sum_{J\in \Delta(K)^+}\!\!\!\!
t^{d(K,J)} c_{(K,J)}e_J\right) = \\
&\stackrel{(\ref{hyperplane})}{=}\!\!\!\!\!\!\!\!\!\!\!\!\sum_{J\in \Delta(I)^-\cap \Delta(K)^+}
\!\!\!\!\!\!\!\!\!\!\!\! t^{d(I,K)-d(K,J)}c_J \left(  t^{d(K,J)} c_{(K,J)}\right)
=t^{d(I,K)}\left[\sum_{J\in \Delta(I)^-\cap \Delta(K)^+}\!\!\!\!\!\!\!\! \!\!\!\! c_{(K,J)}c_J \right].
\end{align*}
Thus:
$$
F_I(e_K^t)=0\: \forall\: t\neq 0 \Leftrightarrow\: \sum_{J\in \Delta(I)^-\cap \Delta(K)^+}\!\!\!\!\!\!c_{(K,J)}c_J =0.
$$
This is a linear condition on the coefficients $c_J$, with $J\in \Delta(I)^-$. Therefore
\begin{align}\label{eqns3}
T_t\subset (F_I=0) \mbox{ for } t\neq 0 &\Leftrightarrow 
\begin{cases}
F_I(e_L)=0\   &\forall L\in \Delta(I)^-\cap B[I_0,k_1]  \\ 
F_I(e_K^t)=0 \ \forall t\neq 0 \  &\forall K\in \Delta(I)^-\cap B[I_0,k_2] 
\end{cases}\\&\nonumber
\Leftrightarrow 
\begin{cases}
c_L=0  &\forall L\in \Delta(I)^-\cap B[I_0,k_1]\\
\displaystyle\sum_{J\in \Delta(I)^-\cap \Delta(K)^+}\!\!\!\!\!\!\!\!\!\!\!\!c_{(K,J)}c_J \quad
=0 \ &\forall K\in \Delta(I)^-\cap B[I_0,k_2],
\end{cases}
\end{align}
where $B[J,u]=\{K\in \Lambda |\: d(J,K)\leq u\}$. Set
$$
c=\left|\Delta(I)^-\cap B[I_0,k_1]\right|+\left|\Delta(I)^-\cap B[I_0,k_2]\right|.
$$
The problem is now reduced to finding a solution $(c_J)_{J\in \Delta(I)^-}$ of the linear system given by the $c$ equations (\ref{eqns3}) with $c_I\neq 0$.

In the following we write for short $s=s_I^-$, $\overline{s}=s_I^+$ and $D=d(I,I_0)>k_1+k_2+1$. 
We want to find $s+1$ complex numbers $c_I=c_0,c_1,\dots, c_{s}$ satisfying the following conditions 
\begin{align}\label{eqns4}
\begin{cases}
c_{j}=0  &\forall j=s,\dots,D-k_1\\
\displaystyle\sum_{l=0}^{d(I,K)}\left(c_{d(I,K)-l}\!\!\!\!\displaystyle\sum_{J\in \Delta(I)^-\cap\Delta(K,l)}
\!\!\!\!\!\!\!\!\!\!\!\!c_{(K,J)} \right)=0 
\ &\forall K\in \Delta(I)^-\cap B[I_0,k_2].
\end{cases}
\end{align}
For $0\leq l\leq d(I,K)$, we have
\begin{align*}
\displaystyle\sum_{J\in \Delta(I)^-\cap\Delta(K,l)}
\!\!\!\!\!\!\!\!\!\!\!\!c_{(K,J)}
&=\!\!\!\!\!\!\!\!\displaystyle\sum_{J\in \Delta(I)^-\cap\Delta(K,l)}
\!\!\!\!
\binom{s_{K^1}^+}{d(K^1,J^1)}\cdots \binom{s_{K^r}^+}{d(K^r,J^r)}
=\sum_{\substack{\pmb{l}=(l_1,\dots,l_r) \\ 0\leq l_1,\dots,l_r \\ l_1+\dots+l_r=l}}
\!\!\!\!\binom{s_{K^1}^+}{l_1}\cdots \binom{s_{K^r}^+}{l_r} = \\
&=\binom{s_{K^1}^+ +\dots+s_{K^r}^+}{l}=\binom{s_{K}^+}{l}=\binom{s_{I}^+ + d(I,K)}{l}.
\end{align*}
Thus the system (\ref{eqns4}) can be written as
\begin{align*}
\begin{cases}
c_{j}=0  &\forall j=s,\dots,D-k_1 \\
\displaystyle\sum_{k=0}^{j}\binom{\overline{s} +j}{j-k}c_{k} =0 
\ &\forall j=s,\dots,D-k_2,
\end{cases}
\end{align*}
that is
\begin{align}\label{eqns5}
\begin{cases}
c_{s}=0\\
\vdots\\
c_{D-k_1}=0\\
\end{cases}
\begin{cases}
\binom{\overline{s} +s}{0}c_{s}+\binom{\overline{s} +s}{1}c_{s-1}+\cdots+\binom{\overline{s} +s}{s}c_{0}=0\\
\vdots\\
\binom{\overline{s} +D-k_2}{0}c_{D-k_2}+\binom{\overline{s} +D-k_2}{1}c_{D-k_2-1}+\cdots
+\binom{\overline{s} +D-k_2}{D-k_2}c_{0}=0.
\end{cases}
\end{align}

We will show that the linear system (\ref{eqns5}) admits a solution with $c_0\neq 0$. 
If $s<D-k_2$, then the system (\ref{eqns5}) reduces to $c_s=\dots=c_{D-k_1}=0$. In this case we can take $c_0=1, c_1=\dots,c_s=0$. 
From now on assume that $s\geq D-k_2$. 
Since $c_s=\dots=c_{D-k_1}=0$  in (\ref{eqns5}), we are reduced to checking that the following system admits a solution 
$(c_i)_{0\leq i\leq D-k_1+1}$ with $c_0\neq 0$:
\begin{align*}
\begin{cases}
\binom{\overline{s} +s}{s-(D-k_1+1)}c_{D-k_1+1}+\binom{\overline{s} +s}{s-(D-k_1)}c_{D-k_1}+
\cdots+\binom{\overline{s} +s}{s}c_{0}=0\\
\vdots\\
\binom{\overline{s} +D-k_2}{k_1-1-k_2}c_{D-k_1+1}+\binom{\overline{s} +D-k_2}{k_1-k_2}c_{D-k_1}+
\cdots+\binom{\overline{s} +D-k_2}{D-k_2}c_{0}=0.
\end{cases}
\end{align*}
Therefore, it is enough to check that the $(s-D+k_2+1)\times (D-k_1+1)$ matrix
\begin{align*}
M=
\begin{pmatrix}
\binom{\overline{s} +s}{s-(D-k_1+1)}&\binom{\overline{s} +s}{s-(D-k_1)}&
\cdots&\binom{\overline{s}+s}{s-1}\\
\vdots&\vdots&\ddots &\vdots\\
\binom{\overline{s} +D-k_2}{k_1-1-k_2}&\binom{\overline{s} +D-k_2}{k_1-k_2}&
\cdots&\binom{\overline{s} +D-k_2}{D-k_2-1}
\end{pmatrix}
\end{align*}
has maximal rank. Since $s\leq D$ and $D>k_1+k_2+1$, we have $s-D+k_2+1< D-k_1+1$.
So it is enough to show that the $(s-D+k_2+1)\times (s-D+k_2+1)$ submatrix of $M$
\begin{align*}
M'=&
\begin{pmatrix}
\binom{\overline{s} +s}{s-(s-D+k_2+1)}&\binom{\overline{s} +s}{s-(s-D+k_2)}&
\cdots&\binom{\overline{s} +s}{s-1}\\
\vdots&\vdots& \ddots &\vdots\\
\binom{\overline{s} +D-k_2}{D-k_2-(s-D+k_2+1)}&\binom{\overline{s} +D-k_2}{D-k_2-(s-D+k_2)}&
\cdots&\binom{\overline{s} +D-k_2}{D-k_2-1}
\end{pmatrix} =\\
=&
\begin{pmatrix}
\binom{\overline{s} +s}{\overline{s} +s+1-D+k_2}&\binom{\overline{s} +s}{\overline{s} +s-D+k_2}&
\cdots&\binom{\overline{s} +s}{\overline{s} +1}\\
\vdots&\vdots& \ddots &\vdots\\
\binom{\overline{s} +D-k_2}{\overline{s} +s+1-D+k_2}&\binom{\overline{s} +D-k_2}{\overline{s} +s-D+k_2}&
\cdots&\binom{\overline{s} +D-k_2}{\overline{s}+1}
\end{pmatrix}
\end{align*}
has non-zero determinant. To conclude,  observe that the determinant of $M'$ is equal to the determinant of the matrix of binomial coefficients 
$$
M'':=\left(\binom{i}{j}\right)_{\substack{ \hspace{-0.7cm}
\overline{s} +D-k_2\leq i\leq \overline{s} + s\\ \overline{s} +1\leq j\leq \overline{s} +s+1-D+k_2}}.
$$
Since $D-k_2>k_1+1\geq 1$,  $\det(M')=\det(M'')\neq 0$ by \cite[Corollary 2]{GV85}.
\end{proof}

\begin{Proposition}\label{limitosculatingspacessegverII}
The Segre-Veronese variety $SV_{\pmb{d}}^{\pmb{n}}\subseteq\mathbb{P}^{N(\pmb{n},\pmb{d})}$ has 
$(n_1+1)$-osculating regularity.
\end{Proposition}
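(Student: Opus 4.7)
The plan is to extend the strategy of Proposition~\ref{limitosculatingspacessegver} from a single connecting curve to a family of $n_1$ curves, one along each of the first $n_1$ coordinate directions available from the factor $\mathbb{P}^{n_1}$.

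First I would reduce, via a projective automorphism of $SV_{\pmb{d}}^{\pmb{n}}$, to the case where the general points are the coordinate points $p_1 = e_{I_0}, p_2 = e_{I_1}, \ldots, p_{n_1+1} = e_{I_{n_1}}$, where $I_k = ((k,\dots,k),\dots,(k,\dots,k)) \in \Lambda$. This is possible because $n_1 \leq n_l$ for every $l$, so $n_1+1$ distinct diagonal coordinate points exist. For each $j \in \{2,\dots,n_1+1\}$ I would connect $p_1$ to $p_j$ by the rational normal curve
$$
\gamma_j([t:s]) = (s\,e_0 + t\,e_{j-1})^{d_1} \otimes \cdots \otimes (s\,e_0 + t\,e_{j-1})^{d_r},
$$
working in the chart $s=1$.

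Next, generalize Notation~\ref{defdelta} to several directions: for each $j \in \{1,\dots,n_1\}$ and each $I \in \Lambda_{n,d}$, let $\delta_j^l(I)$ be the index obtained from $I$ by promoting $l$ of its zero entries to $j$'s, and extend this to multi-indices $I \in \Lambda$ and multi-exponents $\pmb{l}$. Define $\Delta_j(I)^{+}, \Delta_j(I)^{-}, s_{I,j}^{\pm}$ in the obvious way. The same binomial expansion as in Proposition~\ref{limitosculatingspacessegver} yields an explicit representation of a basis of $T^k_{\gamma_j(t)}$ in terms of $\{e_J\}_{J\in\Lambda}$: each generator has the shape $\sum_{J \in \Delta_j(I)^{+}} t^{d(I,J)}\,c_{(I,J)}\,e_J$ for some $I$ with $d(I,I_0)\leq k$. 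Concatenating the generators from $T^k_{e_{I_0}}$ and from each $T^k_{\gamma_j(t)}$ gives completely explicit generators of $T_t = \langle T^k_{e_{I_0}}, T^k_{\gamma_2(t)},\dots,T^k_{\gamma_{n_1+1}(t)}\rangle$.

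The heart of the argument is to prove $T_0 \subseteq T^{2k+1}_{e_{I_0}}$. Exactly as in the previous proof, for each $I \in \Lambda$ with $d(I,I_0) > 2k+1$ I would seek a hyperplane
$$
F_I \;=\; \sum_{J} t^{d(I,J)}\,c_J\,z_J \;=\; 0
$$
containing $T_t$ for all $t \neq 0$ and with $c_I \neq 0$, where $J$ ranges over indices from which $I$ is recovered by pushing entries in any of the directions $1,\dots,n_1$ down to $0$. Evaluating $F_I$ on each generator of $T_t$ and using the Vandermonde--Chu collapse
$$
\sum_{J \in \Delta_j(I)^{-}\cap\Delta_j(K)^{+}} c_{(K,J)} \;=\; \binom{s_{K,j}^{+}}{l},
$$
as in the strong $2$-osculating case, reduces the problem to a linear system with coefficient matrix built from binomial coefficients.

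The main obstacle is then to verify that this system admits a solution with $c_I \neq 0$. I expect that, after quotienting out the equations forced by the generators of $T^k_{e_{I_0}}$ (which simply kill the entries $c_J$ for $J$ close to $I_0$), the decisive block is again a rectangular matrix of the form $\bigl(\binom{i}{j}\bigr)_{i,j}$ with $i,j$ running over suitable intervals controlled by $s_{I,j}^{\pm}$ and by the slack $d(I,I_0)-2k-1 \geq 1$; its maximal rank then follows from \cite[Corollary 2]{GV85}. The combinatorial bookkeeping is heavier than in the $m=2$ case because entries can flow back to $0$ from several directions simultaneously, but since each curve $\gamma_j$ only involves a single pair of labels $\{0,j-1\}$, the contributions from different $\gamma_j$'s decouple on the strata of $\Lambda$ indexed by which non-zero labels appear, and the analysis reduces essentially $n_1$-fold to the determinant estimate already used in Proposition~\ref{limitosculatingspacessegver}.
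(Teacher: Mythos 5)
Your overall architecture coincides with the paper's: the same connecting curves $\gamma_j$, the same multi-direction generalization of Notation~\ref{defdelta}, the same hyperplane ansatz $F_I=\sum_J t^{d(I,J)}c_J z_J$, and the same Gessel--Viennot endgame. The gap is precisely in the step you summarize as ``the contributions from different $\gamma_j$'s decouple on the strata of $\Lambda$ indexed by which non-zero labels appear.'' As you set it up --- with $J$ ranging over indices related to $I$ by demoting entries in \emph{any} of the directions $1,\dots,n_1$ --- the system does not decouple: an index $I$ with $d(I,I_0)>2k+1$ lies in some $\Delta(K)^+_j$ with $K\in B[I_0,k]$, but $K$ (hence $I$) may also carry up to $k$ entries with labels $i\neq j$, so your proposed support of $F_I$ contains indices that occur in generators of $T^k_{\gamma_i(t)}$ for several $i$ simultaneously, and imposing $F_I=0$ on all of them couples the unknowns across directions. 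The claim that ``the analysis reduces essentially $n_1$-fold'' is therefore unsubstantiated as stated.

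The two missing ingredients --- which are exactly where the hypothesis $d(I,I_0)>2k+1$ gets consumed, something your sketch never locates --- are: (i) a uniqueness lemma: for $d(I,I_0)>2k+1$ there is a \emph{unique} $j$ with $I\in\bigcup_{d(J,I_0)\leq k}\Delta(J)^+_j$ (if $I\in\Delta(J,l)_i\cap\Delta(K,m)_j$ with $i\neq j$ and $J,K\in B[I_0,k]$, then $d(I,I_0)=d(J,I_0)+l\leq d(J,I_0)+d(K,I_0)\leq 2k$, a contradiction); and (ii) a truncation of the support of $F_I$ to $\Gamma(I)=\bigcup_{0\leq l\leq k+1-D+s(I)^-_j}\Delta(I,-l)_j$, i.e.\ demoting only along that single privileged direction $j$ and only down to a bounded level. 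This truncation guarantees that every $J\in\Gamma(I)$ still has more than $k$ entries equal to $j$, hence cannot appear in any generator coming from $\gamma_i$ with $i\neq j$, nor in $T^k_{e_{I_0}}$; only then does the linear system collapse to the single-direction binomial system already solved in Proposition~\ref{limitosculatingspacessegver}. Your intuition that the single pair of labels $\{0,j\}$ per curve forces a decoupling is the right one, but turning it into a proof requires choosing the support of $F_I$ along one direction only, not all of them.
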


\begin{proof}
We follow the same argument and computations as in the proof of Proposition~\ref{limitosculatingspacessegver}.

Given general points $p_0,\dots,p_{n_1}\in SV_{\pmb d}^{\pmb n}\subseteq\mathbb{P}^{N(\pmb n,\pmb d)}$, we may 
apply a projective automorphism of $SV_{\pmb{d}}^{\pmb{n}}\subseteq\mathbb{P}^{N(\pmb{n},\pmb{d})}$ and assume that 
$p_j=e_{I_j}$ for every $j$.
Each $p_j$, $j\geq 1$, is connected to $p_0$ by the degree $d$ rational normal curve defined by 
$$
\gamma_j([t:s])=(se_0+te_j)^{d_1}\otimes\dots\otimes (se_0+te_j)^{d_r}.
$$
We work in the affine chart $(s=1)$, and set $t=(t:1)$. 
Given $k\geq 0$, consider the family of linear spaces 
$$
T_t=\left\langle T^{k}_{p_0},T^{k}_{\gamma_1(t)},\dots,T^{k}_{\gamma_{n_1}(t)}\right\rangle,\: t\in \C\backslash \{0\}.
$$
We will show that the flat limit $T_0$  of $\{T_t\}_{t\in \C\backslash \{0\}}$ in
$\G(\dim(T_t),N(\pmb{n},\pmb{d}))$ is contained in $T^{2k+1}_{p_0}$.

We start by writing the linear spaces $T_t$ explicitly  in terms of the basis $\{e_J|J\in\Lambda\}$. 
As in the proof of Proposition~\ref{limitosculatingspacessegver}, it is convenient to introduce some additional notation. 

Given $I\in \Lambda_{n,d}$, we define $\delta^l_j(I),l\geq 0,$ as in Notation~\ref{defdelta}, 
with the only difference that this time we substitute $0$'s with $j$'s instead of $1$'s.
Similarly, for $I=(I^1,\dots,I^r)\in \Lambda$, $\pmb{l}=(l_1,\dots,l_r)\in \Z^r$, and $l\in \Z$, we define the sets 
$\Delta(I,l)_j, \Delta(I)^+_j, \Delta(I)^-_j \subset \Lambda$, and the integers
$s(I)^+_j,s(I)^-_j\in\{0,\dots,d\}$.

For $j=1,\dots, r$, we define the vectors 
$$
e_0^{j,t}=e_0+te_j,e_1^{j,t}=e_1,e_2^{j,t}=e_2,\dots,e_{n_j}^{j,t}=e_{n_j}\in V_j.
$$
Given $I^l=(i_1,\dots,i_{d_j})\in \Lambda_{n_j,d_j}$, we denote by 
$e_{I^j}^{j,t}\in \Sym^{d_j}V_j$  the symmetric product $e_{i_1}^{j,t}\cdot\ldots\cdot e_{i_{d_j}}^{j,t}$.
Given $I=(I^1,\dots,I^r)\in \Lambda=\Lambda_{\pmb{n},\pmb{d}}$, we denote by 
$e_I^{j,t}\in \mathbb{P}^{N(\pmb{n},\pmb{d})}$
the point corresponding to 
$$
e_{I^1}^{j,t}\otimes\dots\otimes e_{I^r}^{j,t}\in \Sym^{d_1}V_1\otimes\dots\otimes \Sym^{d_r}V_r.
$$
By Proposition \ref{oscsegver} we have 
$$
T_t=\left\langle e_I\: | \: d(I,I_0)\leq k; \: e_I^{j,t}\: |\: d(I,I_0)\leq k, j=1,\dots,n_1\right\rangle,\: t\neq 0.
$$
Now we write each vector $e_I^{j,t}$, with $I=(I^1,\dots,I^r)\in\Lambda$ such that $d(I,I_0)\leq k$, in terms of the basis $\{e_J|J\in\Lambda\}$:
\begin{align*}
e_{I}^{j,t}
&=\!\!\!\!\sum_{J=(J^1,\dots,J^r)\in \Delta(I)^+_j}\!\!\!\!
t^{d(I,J)} c_{(I,J)}e_J
\end{align*}
where $ c_{(I,J)}=\binom{s(I^1)^+_j}{d(I^1,J^1)}\cdots \binom{s(I^r)^+_j}{d(I^r,J^r)}$. 
So we can rewrite the linear subspace $T_t$ as 
\begin{align*}
T_t=&\left\langle e_I \: | \: d(I,I_0)\leq k;\:
\!\!\!\!\sum_{J\in \Delta(I)^+_j}\!\!\!\!
t^{d(I,J)} c_{(I,J)}e_J\: | \: d(I,I_0)\leq k,\ j=1,\dots,n_1
\right \rangle,
\end{align*}
and define the set
$$ 
\Delta=
\bigcup_{1\leq j\leq n_1} 
\bigcup_{d(J,I_0)\leq k} \!\!\!\!\Delta(J)^+_j\subset \Lambda.
$$

On the other hand, by Proposition \ref{oscsegver}, we have 
$$
T^{2k+1}_{p_0}=\left\langle e_I\: | \: d(I,I_0)\leq 2k+1\right\rangle=\{z_I=0\: | \: d(I,I_0)> 2k+1 \}.
$$

As in the proof of Proposition \ref{limitosculatingspacessegver},
in order to prove that $T_0\subset T^{2k+1}_{p_0}$,  it is enough to 
exhibit, for each  $I\in \Delta$ with $d(I,I_0)> 2k+1$, a family of hyperplanes of the form
\begin{equation}\label{eq1}
\left(F_I= \sum_{J\in \Gamma(I)} t^{d(I,J)}c_J z_J=0 \right)
\end{equation}
such that $T_t\subset (F_I=0)$ for $t\neq 0$, and $c_I\neq 0$. Here $\Gamma(I)\subset \Lambda$ is a suitable subset to be defined later. Let $I\in \Delta$ be such that $d(I,I_0)> 2k+1$.
We claim that there is a unique $j$ such that 
\begin{equation}\label{I->j}
I\in \bigcup_{d(J,I_0)\leq k} \!\!\!\!\Delta(J)^+_j.
\end{equation}
Indeed, assume that $I\in \Delta(J,l)_i$ and $I\in\Delta(K,m)_j$, with $d(J,I_0),d(K,I_0)\leq k$.
If $i\neq j$, then we must have 
$$
 d(J,I_0) \geq m \ \ \text{ and } \ \  d(K,I_0) \geq l.
$$
But then $d(I,I_0) = d(J,I_0) + l \leq d(J,I_0) + d(K,I_0)  \leq 2k$, contradicting the assumption that $d(I,I_0)> 2k+1$.
Let $J$ and $j$ be such that $d(J,I_0)\leq k$ and $I\in \Delta(J)^+_j$.
Note that $d(I,I_0)-s(I)^-_j=d(J,I_0) - s(J)^-_j\leq k$, and hence $k+1-d(I,I_0)+s(I)^-_j>0$.
We set $D=d(I,I_0)$ and define 
\begin{equation}\label{Gamma}
\Gamma(I)=\!\!\!\!\!\!\!\!\bigcup_{0\leq l \leq k+1-D+s(I)^-_j} \!\!\!\!\!\!\!\!\Delta(I,-l)_j \subset \Lambda.
\end{equation}
This is the set to be used in \eqref{eq1}. First we claim that 
\begin{equation}\label{eq2}
J\in \Gamma(I)\Rightarrow J \notin  \bigcup_{\substack{1\leq i\leq n_1 \\ i\neq j} }
\bigcup_{d(I,I_0)\leq k} \!\!\!\!\Delta(I)^+_i.
\end{equation}
Indeed, assume that $J\in \Delta(I,-l)_j$ with $0\leq l \leq k+1-D+s(I)^-_j$, and
$J\in \Delta(K)^+_i$  for some $K$ with $d(K,I_0)\leq k$.
If $i\neq j$, then 
$$
s(K)^-_j=s(J)^-_j=s(I)_j^- - l\geq D-(k+1)>k,
$$
contradicting the assumption that $d(K,I_0)\leq k$.
Therefore, if $F_I$ is as in \eqref{eq1} with $\Gamma(I)$ as in \eqref{Gamma}, then we have 
$$
\left\langle e_I\: | \: d(I,I_0)\leq k; \: \sum_{J\in \Delta(I)^+_i}\!\!\!\!
t^{d(I,J)} c_{(I,J)}e_J\: | \: d(I,I_0)\leq k,\ i=1,\dots,n_1, i\neq j \right\rangle \subset (F_I=0) , t\neq 0,$$
and thus
$$
T_t\subset (F_I=0), t \neq 0 \Longleftrightarrow \left\langle \sum_{J\in \Delta(I)^+_j}\!\!\!\!
t^{d(I,J)} c_{(I,J)}e_J\: | \: d(I,I_0)\leq k \right\rangle \subset (F_I=0) , t\neq 0.
$$

The same computations as in the  proof of Proposition \ref{limitosculatingspacessegver} yield
\begin{equation}
\label{eq3}T_t\subset (F_I=0), t \neq 0 \Longleftrightarrow \sum_{J\in \Delta(K)^+_j \cap \Gamma(I)}\!\!\!\! c_{J}c_{(K,J)}=0
\ \ \forall  K\in \Delta(I)^-_j\cap B[I_0,k].
\end{equation}
So the problem is reduced to finding a solution $(c_J)_{J\in\Gamma(I)}$ for the linear system \eqref{eq3} such that $c_I\neq 0$. 
We set $c_J=c_{d(I,J)}$ and reduce, as in the proof of Proposition \ref{limitosculatingspacessegver}, to the linear system
\begin{equation}\label{linsys}
\displaystyle\sum_{l=0}^{k+1-D+s(I)^-_j}\binom{d-i}{D-l-i}c_{l} =0 ,
\ \ \ D-s(I)^-_j\leq i \leq k
\end{equation}
in the variables $c_0,\dots,c_{k+1-D+s(I)^-_j}$. 
The argument used in the end of Proposition \ref{limitosculatingspacessegver} shows that the linear system (\ref{linsys}) admits a solution with $c_0\neq 0$.
\end{proof}

\end{document}